\newcommand{\bC}{\mathbb{C}}
\newcommand{\bM}{\mathbb{M}}
\newcommand{\bQ}{\mathbb{Q}}
\newcommand{\bR}{\mathbb{R}}
\newcommand{\bZ}{\mathbb{Z}}
\newcommand{\cV}{\mathcal{V}}
\newcommand{\fg}{\mathfrak{g}}
\newcommand{\fgl}{\mathfrak{gl}}
\newcommand{\fH}{\mathfrak{H}}
\newcommand{\fm}{\mathfrak{m}}
\newcommand{\fsl}{\mathfrak{sl}}
\newcommand{\fu}{\mathfrak{u}}
\newcommand{\simto}{\overset{\sim}{\to}}
\newcommand{\ad}{\operatorname{ad}}
\newcommand{\Aut}{\operatorname{Aut}}
\newcommand{\Tr}{\operatorname{Tr}}
\newtheorem{thm}{Theorem}[section]
\newtheorem*{conj}{Conjecture}
\newtheorem{prop}[thm]{Proposition}
\newtheorem{lem}[thm]{Lemma}
\newtheorem{cor}[thm]{Corollary}
\theoremstyle{definition}
\newtheorem{defn}[thm]{Definition}
\theoremstyle{remark}
\newtheorem{rem}[thm]{Remark}
\numberwithin{equation}{section}
\begin{document}

\title{Fricke Lie algebras and the genus zero property in Moonshine}
\author{Scott Carnahan}

\begin{abstract}
We give a new, simpler proof that the canonical actions of finite groups on Fricke-type Monstrous Lie algebras yield genus zero functions in Generalized Monstrous Moonshine, using a Borcherds-Kac-Moody Lie algebra decomposition due to Jurisich.  We describe a compatibility condition, arising from the no-ghost theorem in bosonic string theory, that yields the genus zero property.  We give evidence for and against the conjecture that such a compatibility for symmetries of the Monster Lie algebra gives a characterization of the Monster group.
\end{abstract}

\maketitle

\tableofcontents

\section{Introduction}

Modular functions and forms that satisfy a genus zero property play a central role in both old and new Moonshines.  However, we do not have a satisfactory global explanation for their appearance.  In general, the exceptional objects that appear in Moonshine appear to naturally come from conformal field theory, but it appears that working with physical aspects of the genus zero property requires us to leave that relatively safe world.  Mathematically, this seems to require specialized functors out of some nice subcategories of vertex operator algebras.  To date, the targets of such functors seem to involve either string theory or quantum gravity in 3-dimensional asymptotically Anti-de Sitter spacetime.

In this paper, we consider the bosonic string side, and in particular, a quantization functor that yields Borcherds-Kac-Moody Lie algebras equipped with a compatibility groupoid identifying root spaces.  This groupoid structure is sufficiently restrictive that it yields the genus zero property for the functions that appear in Norton's Generalized Moonshine conjecture.  In contrast to the first proof of Norton's conjecture \cite{GM4}, we describe general properties of Lie algebras equipped with compatibility groupoids, and we give a new, simpler description of the Lie algebras, following a decomposition method due to Jurisich.

Genus zero modular curves were connected to the Monster sporadic simple group $\bM$ even before Moonshine was really started: In \cite{O74}, Ogg observed that the quotient $X_0(p)^+$ of the modular curve $X_0(p)$ by its Fricke involution $\tau \mapsto \frac{-1}{p\tau}$ is genus zero if and only if the prime $p$ divides the order of $\bM$, and famously offered a bottle of Jack Daniels to whoever could give a good explanation.  It was only a few years later that McKay found a connection between the representation theory of $\bM$ and the modular $J$-function, which was enhanced in \cite{CN79} to a conjectured correspondence between elements of $\bM$ and genus zero modular functions.

Specifically, Conway and Norton conjectured the existence of a natural graded representation $V = \bigoplus_{n=0}^\infty V_n$ of the Monster $\bM$, such that for any $g \in \bM$, the graded character $\sum_{n=0}^\infty \Tr(g|V_n) q^{n-1}$ is the $q$-expansion of a specific holomorphic function $T_g(\tau)$ on the complex upper half-plane $\fH$.  They observed that all of the proposed expansions satisfied two properties:

\begin{enumerate}
\item The functions are Hauptmoduln, i.e., there is a discrete subgroup $\Gamma_g < SL_2(\bR)$ such that $T_g$ generates the function field of the corresponding quotient $\Gamma_g \backslash \fH$ of the upper half-plane.  In particular, the quotient Riemann surface is complex-analytically isomorphic to a sphere with finitely many punctures.
\item The coefficients satisfy some complicated recursion relations on their coefficients, which they named ``replication identities''.
\end{enumerate}

The initial formulation of replication in Section 8 of \cite{CN79} used equivariant Hecke operators applied to power series with coefficients in a representation ring, but without using those words.  It seems that this formulation was largely forgotten in favor of concrete identities on characters for a long time.  Hecke operators in Moonshine were recalled in a few later papers, e.g., \cite{F96} and in \cite{G09}, but they did not directly play a major role until they were employed to produce genus zero functions in \cite{GM1}.

Norton \cite{N84} reformulated the replicability condition in terms of Grunsky coefficients, which had been used in unimodular function theory for close to 50 years:  One starts with a formal power series $f(q) = q^{-1} + \sum_{n > 0} a_n q^n$ and defines the Grunsky matrix $\{ H_{m,n}\}_{m,n \in \bZ_{>0}}$ by the bivarial transform:
\[ \log \frac{f(p)-f(q)}{p^{-1}-q^{-1}} = - \sum_{m,n=1}^\infty H_{m,n} p^m q^n.\]
The power series $f$ is then replicable if and only if the value of $H_{m,n}$ depends only on $\gcd(m,n)$ and $mn$.  Norton noted that replicable functions are completely determined by their first 25 coefficients, and a proof appears in \cite{CN95}.  Furthermore, he claimed that the Moonshine functions satisfied a stronger notion that he called complete replicability (equivalent to the condition that all replicates are replicable).  Completely replicable functions are even more highly determined than replicable functions, as they are fixed by the first seven coefficients of all of the replicates.  Norton suggested that the bivarial transform is evidence of the existence of a doubly-graded algebraic structure that controls Monstrous Moonshine.


Borcherds's solution of the Monstrous Moonshine conjecture \cite{B92} uses replicability in the following way: Starting with the Monster module $V^\natural$ (constructed in \cite{FLM88}), one obtains a Lie algebra $\fm$ with $\bM$-action by employing a bosonic string quantization functor, and by applying a twisted denominator identity to $\fm$, one obtains complete replicability of trace functions, also known as McKay-Thompson series.  Unpublished work of Koike showed that Conway and Norton's proposed functions are all completely replicable.  From this, the conjecture follows from checking that the seven homogeneous subspaces of $V^\natural$ of lowest degree admit the predicted $\bM$-module structure.

By examining Borcherds's proof, we see that the bivarial transform is more or less the logarithm of the Weyl-Kac-Borcherds denominator identity, so $\fm$ is indeed the algebraic structure that fits into Norton's observation.  Furthermore, as we suggested earlier, replicability of a power series is equivalent to the condition that a modified Hecke operator sends the series to a monic degree $n$ polynomial in itself.  We find in \cite{GM1} that such Hecke operators appear naturally in twisted denominator identities.

We come to the main point of this paper, which is a derivation of the Hecke-monic property from group actions on Lie algebras by more transparent means.  Both Borcherds's proof of the Monstrous Moonshine conjecture, and the Lie-theoretic results in \cite{GM1} use the homology of the positive half of a Lie algebra, and the computation of this homology requires substantial structure theory of Borcherds-Kac-Moody Lie algebras.  However, Borcherds's computation can be avoided by appealing to a decomposition of $\fm$ due to Jurisich in \cite{J98}.  We apply the same idea to decompose the Lie algebras that appear in Generalized Monstrous Moonshine, and this substantially simplifies the derivation of twisted denominator identities in Section \ref{sec:decomposition} and the Hecke-monic property in Section \ref{sec:hecke}.

Returning to Ogg's Jack Daniels problem, we see that Borcherds's theorem solves half of it: The proof explains why $X_0(p)^+$ has genus zero for any prime $p$ dividing $|\bM|$.  Indeed, for any prime $p$ dividing the order of $\bM$ there is an element of order $p$ whose McKay-Thompson series is a Hauptmodul for some quotient of $X_0(p)$ or $X_0(p^2)$ (the latter case only possible if $p=3$), and the genus of $X_0(p)^+$ is then forced to be zero.   In the other direction, we would like to know why the Monster is so big that its order is a multiple of all fifteen of the relevant primes, or equivalently, why $V^\natural$ has so many symmetries, and this question seems to be wide open.  A closely related question, suggested to me by Terry Gannon, is ``why do small holomorphic vertex operator algebras admit the most exotic symmetries in mathematics?''  If we order the holomorphic vertex operator algebras by increasing central charge, the first nontrivial group we encounter is the Lie group $E_8$ at central charge 8, and the first nontrivial finite group we encounter (assuming the uniqueness of $V^\natural$) is the Monster at central charge 24.  While this phenomenon is fascinating, and a good answer to the question may help resolve Ogg's problem, we do not seem to have enough data now to draw strong conclusions.

Later work has yielded more clarity about the appearance of the genus zero property in the theory of replicable functions.  First, on the level of modularity, Yves Martin showed \cite{M94} that any completely replicable function that is ``$J$-final'', i.e., has a replicate that is the $J$-function, is invariant under some $\Gamma_0(N)$.  Kozlov \cite{K94} showed that completely replicable functions with suitable periodicity in the replicates satisfy modular equations of many orders.  Cummins and Gannon \cite{CG97} built on this result to show that completely replicable functions of finite order are either highly degenerate Laurent polynomials in $q$ (sometimes called ``modular fictions'') or Hauptmoduln.  This result was extended to cover all replicable functions of finite order in \cite{GM1}, essentially by the same techniques, but with some extra bookkeeping.

We now have a better understanding of what allows the distinguished $\bM$ action on the Monster Lie algebra $\fm$ to yield the genus zero condition, in part because we have many new examples of similar behavior, coming from Generalized Monstrous Moonshine.  As we will explain in Section \ref{sec:no-ghost}, the main idea is that the action is ``quantization compatible'', i.e., that string quantization identifies the action on many different root spaces.  We then define notions of q-compatible symmetry and q-compatible action, even for Lie algebras that don't necessarily come from conformal field theory, and show that q-compatible actions yield genus zero functions.

Using q-compatibility, we can hope to shed some light on the vague, still-open half of Ogg's question, namely why the Monster has elements of prime order $p$ for all $p$ for which $X_0(p)^+$ is genus zero.  Let $G$ be the group of q-compatible automorphisms of $\fm$.  We know that the Monster is a subgroup of $G$, by virtue of Borcherds's construction of $\fm$, but Proposition \ref{prop:prime-order-automorphisms} asserts that $G$ contains no elements whose order is a prime that doesn't divide the order of the Monster, essentially by the genus zero property.  We can therefore break up this part of Ogg's question into a puzzle and a conjecture.
\vspace{2ex}

\noindent\textbf{Puzzle:} Is there an intrinsic way to show, without resorting to the existence of $V^\natural$, that for any prime $p$ such that $X_0(p)^+$ is genus zero, there exists a q-compatible automorphism of $\fm$ of order $p$, or even better, an automorphism for which we obtain the Hauptmodul for $X_0(p)^+$?

\begin{conj}
$G \cong \bM$.
\end{conj}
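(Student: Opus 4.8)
The containment $\bM \hookrightarrow G$ is supplied by Borcherds's construction, so the entire content lies in the reverse inclusion, and my plan would be to proceed in three stages: (a) show that a q-compatible automorphism is rigidly determined by a small amount of linear data, (b) show that this data is forced to be of Monstrous type by the genus zero property, and (c) identify the resulting symmetry with an element of $\bM = \Aut(V^\natural)$. For (a) I would exploit the compatibility groupoid directly. Since it identifies the root spaces $\fm_{(m,n)}$ across the $\bZ^2$-grading, the action of any $g \in G$ on all of $\fm$ is determined by its action on the ``level-one slice'' $\bigoplus_n \fm_{(1,n)}$, which as a graded vector space is isomorphic to $V^\natural$ up to a shift of grading. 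Two elements of $G$ agreeing on this slice agree everywhere, so $G$ injects into the graded linear automorphisms of $V^\natural$, and the first task is to pin down exactly which of these extend to q-compatible automorphisms of $\fm$.

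For (b) I would run the argument behind Proposition \ref{prop:prime-order-automorphisms} in greater generality, for an arbitrary $g \in G$ rather than merely an element of prime order. The genus zero machinery of the paper produces, for each $g$, a Hauptmodul-type trace function $\sum_n \Tr(g \mid \fm_{(1,n)}) q^{n-1}$; complete replicability together with the rigidity of Hauptmoduln (a replicable function being fixed by finitely many of its coefficients) should then force this function into the finite list of Monstrous Moonshine functions. Because such a function determines the eigenvalue multiset of $g$ on each graded piece, this would constrain the conjugacy-invariant data of every element of $G$, and in particular would be the natural route to proving that $G$ is finite with the same trace data on $V^\natural$ as $\bM$.

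The final stage (c) would invoke $\Aut(V^\natural) = \bM$ from \cite{FLM88}: if a q-compatible automorphism can be upgraded to a genuine automorphism of the vertex operator algebra $V^\natural$, it is automatically an element of the Monster, and the conjecture follows. The difficulty is that this upgrade is not given for free — q-compatibility remembers the Lie bracket and the groupoid identifications, not the vertex-algebra multiplication — so one would have to reconstruct enough of the VOA structure from the bracket and the compatibility data to force the symmetry into $\Aut(V^\natural)$.

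The main obstacle, and the reason the statement remains a conjecture rather than a theorem, sits precisely at the seam between (b) and (c). The q-compatibility condition is a statement about graded traces and about the identifications made by the compatibility groupoid; it does not, on its face, record the full vertex-algebra structure on $V^\natural$. Consequently there is no a priori reason that a q-compatible automorphism reproducing a Monstrous trace function must arise from a vertex operator algebra automorphism, and one cannot immediately exclude ``phantom'' symmetries that realize Monstrous functions numerically without lying in $\bM$. Closing this gap would require either reconstructing the VOA multiplication from the Lie-theoretic and groupoid data, or a direct classification of q-compatible automorphisms that bypasses the vertex algebra altogether; the evidence against the conjecture alluded to in the abstract suggests that neither is straightforward, and that $G$ could in principle be strictly larger than $\bM$.
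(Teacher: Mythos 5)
The statement you are attempting is stated in the paper as a \emph{conjecture}: the paper offers no proof of it, only evidence for and against, so there is no ``paper proof'' to match your attempt to. Your honest identification of the seam between stages (b) and (c) as the unresolved difficulty is consistent with the paper's own assessment. However, two of your intermediate steps are wrong or unjustified as stated, and they matter. First, in stage (b) you claim that complete replicability plus the Hauptmodul property ``should then force this function into the finite list of Monstrous Moonshine functions.'' This does not follow: there are many completely replicable Hauptmoduln that are \emph{not} McKay-Thompson series of the Monster, and the paper cites exactly this as the strongest evidence \emph{against} the conjecture --- if the conjecture is true, some further obstruction (e.g., positivity of twisted-module characters, which the paper notes appears to be invisible at the Lie algebra level) must exclude the non-monstrous functions, and no such obstruction is known. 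Moreover, the Hauptmodul conclusion (Theorem \ref{thm:q-compatible-action-gives-hauptmodul}) applies only to \emph{finite-order} elements, and nothing forces a general element of $G$ to have finite order; the paper only bounds the orders of finite-order elements.

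Second, your rigidity argument in (a)--(b) works with graded traces, and traces cannot distinguish a unipotent automorphism from the identity, so ``the trace function determines the eigenvalue multiset'' constrains only the semisimple part of an element of $G$; phantom unipotent symmetries are invisible to your argument. The paper addresses precisely this point with a categorified (virtual-module rather than character-level) version of complete replicability in Proposition \ref{prop:injectivity-for-q-compatible-maps}, which is what kills unipotent q-compatible automorphisms and yields the sharper statement that $G$ injects into $\prod_{n=1}^5 GL_{c(n)}(\bC)$, not merely into the graded linear automorphisms of the simple-root slice. Finally, the paper gives concrete evidence that your stage (c) cannot be carried out by any general reconstruction of vertex-algebraic structure from the Lie algebra plus groupoid: for $g$ in class 27A and $h$ in class 27B, the corresponding abelian intertwining algebras have non-isomorphic automorphism groups yet produce isomorphic Fricke Lie algebras with equivalent q-compatibility groupoids, so the ``add a torus and quantize'' functor genuinely loses the information your stage (c) would need. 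Any successful attack on the conjecture must route around this loss of information, and none is currently known.
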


If this conjecture is true, then we have a new characterization of the Monster in terms of a machine that directly generates genus zero functions.  We have the following evidence supporting the conjecture:

\begin{enumerate}
\item As we mentioned, $\bM \subseteq G$, by virtue of the quantization functor applied to $V^\natural$.
\item Any element of $G$ yields a completely replicable series (with complex coefficients), and in the finite-order case, a Hauptmodul.  This places an absolute bound on the order of finite-order elements of $G$, and as we mentioned, severely restricts the primes that can be orders of elements of $G$.  In particular, $G$ does not contain a copy of $U(1)$.
\item We point out in Lemma \ref{lem:diagram-automorphism-group} that the group of all diagram automorphisms of $\fm$ is infinite dimensional, of the form $\prod_{n=-1}^\infty GL_{c(n)}(\bC)$,
where the ranks $c(n)$ are the coefficients of $J$.  However, we show in Proposition \ref{prop:injectivity-for-q-compatible-maps} that the projection of the subgroup $G$ into $\prod_{n=1}^5 GL_{c(n)}(\bC)$ is injective, using a categorically enhanced version of complete replicability.
\end{enumerate}

For evidence against the conjecture, perhaps the strongest is the fact that there are many non-monstrous completely replicable functions.  If the conjecture is true, then there is some set of obstructions that keeps these series from being characters of elements in $G$.  For many of these functions, there are concrete positivity obstructions to their appearance as McKay-Thompson series, coming from orbifold conformal field theory considerations.  Most immediately, for any automorphism $g$ of a holomorphic vertex operator algebra with character $J$ (giving a McKay-Thompson series $T_g(\tau)$), there is a $g$-twisted module with character $T_g(-1/\tau)$ whose graded pieces must have non-negative dimension.  Similarly, there are more subtle positivity constraints from the Weil representation attached to the corresponding abelian intertwining algebra.  It is an interesting and natural question to consider how such positivity constraints interact with existing characterizations of the Monstrous Moonshine functions, such as in \cite{CMS04} and \cite{DF11}.  As far as I can tell, the positivity considerations coming from these other cusp expansions are invisible at the Lie algebra level.  More to the point, I am unaware of any substantial obstructions to the existence of non-monstrous q-compatible symmetries of $\fm$ that yield these series.

A second piece of evidence against the conjecture is that we have a concrete example of a Fricke Lie algebra $\fg$ with q-compatiblity groupoid coming from quantization, such that $\Aut^q(\fg)$ is strictly larger than the group of symmetries of the abelian intertwining algebra.  This means the ``add a torus and quantize'' process is not faithful on isomorphisms in general.  We note that a second, more severe example would come from a $C_2$-cofinite counterexample $V$ to the Frenkel-Lepowsky-Meurman uniqueness conjecture, because $V$ cannot admit any non-Fricke involutions, despite such q-compatible symmetries existing in the corresponding Lie algebra.  Naturally, one may instead view this as evidence that $V$ does not exist.


\section{Hauptmoduln in Generalized Monstrous Moonshine}

In 1987, Norton proposed an enhancement of the Monstrous Moonshine conjecture, on the basis of computations by Queen \cite{Q81} and later by himself.

\begin{conj} (Generalized Moonshine \cite{N87}, revised in \cite{N01})
There exists a rule that assigns to each element $g$ of the Monster simple group $\bM$ a graded projective representation $V(g) = \bigoplus_{n \in \bQ} V(g)_n$ of the centralizer $C_{\bM}(g)$, and to each pair $(g,h)$ of commuting elements of $\bM$ a holomorphic function $Z(g,h,\tau)$ on the complex upper half-plane $\fH$, satisfying the following conditions:
\begin{enumerate}
\item There is some lift $\tilde{h}$ of $h$ to a linear transformation on $V(g)$ such that
\[ Z(g,h,\tau) = \sum_{n \in \bQ} Tr(\tilde{h}|V(g)_n) q^{n-1}. \]
\item $Z(g,h,\tau)$ is invariant up to constant multiplication under simultaneous conjugation of the pair $(g,h)$ in $\bM$.
\item $Z(g,h,\tau)$ is either constant, or a Hauptmodul for some genus zero congruence group.
\item For any $\left( \begin{smallmatrix} a & b \\ c & d \end{smallmatrix} \right) \in SL_2(\bZ)$, $Z(g^a h^c, g^b h^d, \tau)$ is proportional to $Z(g,h, \frac{a \tau + b}{c \tau + d})$.
\item $Z(g,h,\tau) = J(\tau) = \frac{E_4(\tau)^3}{\Delta(\tau)} - 744 = q^{-1} + 196884 q + \cdots$ if and only if $g=h=1 \in \bM$.
\end{enumerate}
\end{conj}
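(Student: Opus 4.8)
The plan is to establish condition (3), the genus zero property, which is the substantive part of the statement; the remaining conditions are largely formal consequences of the construction. Fix a commuting pair $(g,h) \in \bM \times \bM$. Using the $SL_2(\bZ)$-action of condition (4), I would first reduce to the case in which $g$ is Fricke, so that the relevant data is a $g$-twisted sector on which $h$ acts. Applying the bosonic string quantization functor to the abelian intertwining algebra built from $V^\natural$ and its twisted modules, I obtain a Fricke-type Monstrous Lie algebra $\fg$ of Borcherds-Kac-Moody type carrying a commuting $h$-action. The no-ghost theorem identifies the graded root spaces of $\fg$ with shifted graded pieces of the twisted module, and, crucially, does so $h$-equivariantly across infinitely many root spaces at once; I would record these identifications as the compatibility groupoid underlying the notion of a q-compatible action.

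Next I would extract an $h$-equivariant twisted denominator identity. The classical route computes the homology $H_\ast(\fu)$ of the positive subalgebra $\fu$ of $\fg$ using the full structure theory of Borcherds-Kac-Moody algebras; following Jurisich, I would instead decompose $\fg$ so that its bulk is a \emph{free} Lie algebra on an explicit graded $h$-module. For a free Lie algebra the homology is concentrated in degrees $0$ and $1$ and is completely transparent, so the $h$-equivariant Weyl-Kac-Borcherds denominator identity collapses to a clean product-over-roots formula. Taking logarithms recovers precisely Norton's bivarial transform, with the coefficients $H_{m,n}$ now expressed through traces of $h$ on root spaces.

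I would then read this identity as the Hecke-monic property: a suitably normalized Hecke operator $T_n$ sends $Z(g,h,\tau)$ to a monic degree $n$ polynomial in the functions $Z(g^a h^c, g^b h^d, \tau)$ attached to the pairs obtained from $(g,h)$ by the $SL_2(\bZ)$-action. Assembled over all $n$, this is exactly complete replicability of $Z(g,h,\tau)$. Invoking the classification of completely replicable functions of finite order (Cummins-Gannon, as extended in \cite{GM1}), every such series is either a ``modular fiction'' (a degenerate Laurent polynomial in $q$) or a Hauptmodul. I would exclude the modular fiction alternative for nonconstant $Z$ using the explicit low-degree coefficients furnished by $\dim V(g)_n$, which yields condition (3); conditions (1), (2), (4), and (5) then follow from the equivariance built into the quantization functor together with the normalization $Z(1,1,\tau) = J(\tau)$.

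The hard part will be the first step. I must set up the quantization functor and the no-ghost theorem uniformly enough across \emph{all} twisted sectors to guarantee that the root-space identifications are simultaneously $h$-equivariant and mutually compatible, so that the compatibility groupoid is strong enough to force the full Hecke-monic relation rather than merely a single denominator identity. It is exactly this uniform q-compatibility, rather than any sector-by-sector computation, that drives the genus zero conclusion, and it is what lets Jurisich's simplified homology computation of the second step take the place of the heavier Borcherds-Kac-Moody structure theory.
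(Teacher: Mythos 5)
Your overall architecture coincides with the paper's: build the abelian intertwining algebra from the twisted modules of $V^\natural$, add a rank-two torus, quantize via the no-ghost theorem so that root spaces are identified equivariantly with graded pieces of twisted modules (the q-compatibility groupoid), replace the BGG-type homology computation with Jurisich's free-Lie-algebra decomposition to obtain the twisted denominator identity, read off the Hecke-monic property, and invoke the Kozlov/Cummins--Gannon classification via \cite{GM1}. That is exactly the route assembled in Sections 2--5 of the paper, so on the Fricke side your proposal is on target.

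The genuine gap is your first step: the claimed reduction of an arbitrary commuting pair to the case of Fricke $g$ via the $SL_2(\bZ)$-action. This reduction fails. The first coordinates reachable from $(g,h)$ are the elements $g^a h^c$ with $\gcd(a,c)=1$, and there are commuting pairs in $\bM$ for which every such element is non-Fricke: take commuting involutions $g,h$ with $g$, $h$, and $gh$ all in class $2B$ (pure $2B$ fourgroups exist in $\bM$). The reachable first coordinates are then only $g$, $h$, $gh$ --- the identity is not reachable, since $a$ and $c$ cannot both be even when $ad-bc=1$ --- so no transform of the pair has Fricke first entry. This is not a peripheral case: it is precisely where the constant functions allowed by condition (3) occur, and nothing in your argument can produce the constant alternative. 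It is also why the paper restricts its simplified method to \emph{Fricke-type} Lie algebras (Section 3: ``In the case that $g$ is Fricke, one may avoid the complications of the BGG-type theory\dots''); the non-Fricke case requires the homology theory of \cite{J04} and the separate arguments of \cite{GM4}, since those Lie algebras do not have the large free component your step two relies on. There is additionally a circularity in your bookkeeping: you invoke condition (4) at the outset to perform the reduction, but then assert at the end that (4) ``follows from the equivariance built into the quantization functor.'' Condition (4) is not a consequence of the Lie-theoretic construction at all; it is the modular invariance of orbifold trace functions, proved in \cite{DLM00} and completed by \cite{CM16}, and must enter as an independent input --- as must the identification of the quantized Lie algebra with the Borcherds-Kac-Moody algebra of controlled simple roots from \cite{GM2}, which you elide when you simply assert that the quantization output is a Fricke-type Monstrous Lie algebra.
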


Claims 1, 2, 4, and 5 were given a physical interpretation in \cite{DGH88}: $V(g)$ is the ``Hilbert space twisted by $g$'' in a conformal field theory with $\bM$ symmetry, and $Z(g,h)$ is a genus one partition function for a torus twisted in ``space'' by $g$ and in ``time'' by $h$.  Each of these physical ideas has a straightforward restatement in the language of vertex operator algebras, e.g., one may set $V(g)$ to be an irreducible $g$-twisted $V^\natural$-module.  Claims 1, 2, and 5 were proved in \cite{DLM00}, in part by showing that irreducible $g$-twisted $V^\natural$-modules exist and are unique up to isomorphism.  Furthermore, Claim 4 was reduced to a $g$-rationality hypothesis, which was finally resolved in \cite{CM16}.

When it comes to interpreting the Hauptmodul claim (3), physics does not give us much help yet.  For the original Monstrous Moonshine Conjecture, the genus zero condition would follow from conjectured uniqueness properties of $V^\natural$, as shown in \cite{T95}.  In fact, the necessary properties are now essentially known, thanks to the holomorphic orbifold work in \cite{vEMS}: Theorem 1 of \cite{PPV17} gives a proof that non-anomalous Fricke orbifolds of $V^\natural$ are isomorphic to $V^\natural$, and the claim that non-anomalous non-Fricke orbifolds yield $V_\Lambda$ appears to be a straightforward but laborous character calculation by the uniqueness results of \cite{DM04}.  On a more physical level of rigor, there is a very recent interpretation of the Hauptmodul condition in the Monstrous Moonshine conjecture in terms of T-duality on a heterotic string model in \cite{PPV16}, but it is not yet clear how this can be applied to Generalized Moonshine functions.  In lieu of a compelling physical interpretation in the general case, it is natural to look to Borcherds's proof of the original Monstrous Moonshine conjecture as a blueprint for a proof.  H\"ohn did precisely that when he showed that $Z(g,h,\tau)$ is a Hauptmodul whenever $g$ lies in class 2A \cite{H03}.  In proving this particular case of Generalized Monstrous Moonshine, H\"ohn also described precisely what needed to be proved to extend Borcherds's method to all relevant cases.  The outline is roughly the following:
\begin{enumerate}
\item Build an abelian intertwining algebra structure on the direct sum $\bigoplus_{i=0}^{|g|-1} V^\natural(g^i)$.
\item Add a spacetime torus: Make a conformal vertex algebra of central charge 26 by taking a graded tensor product with the abelian intertwining algebra of a suitable hyperbolic lattice.
\item Quantize: Apply a bosonic string quantization functor to the conformal vertex algebra.  This yields a Lie algebra $\fm_g$ equipped with a canonical projective action of $C_{\bM}(g)$ by automorphisms.
\item Generate a Lie algebra $L_g$ whose denominator identity is some automorphic infinite product.  This Lie algebra has a ``nice shape'', in the sense that its simple roots and homology are well-controlled.
\item Comparison: Show that the Lie algebra $\fm_g$ is isomorphic to $L_g$.  By \textit{transport de structure}, we obtain a Lie algebra with both a group action and nice shape.
\item Hauptmodul conclusion: Use the twisted denominator identity to produce recursion relations on the characters that are strong enough to conclude that the characters are Hauptmoduln.
\end{enumerate}
The first step was done in the landmark paper \cite{vEMS}, steps 2,3, and 5 were done in \cite{GM4}, step 4 was done in \cite{GM2}, and step 6 was done in \cite{GM1}.  We shall describe the last step in more detail.

We begin the last step with a $\bZ \times \frac{1}{N}\bZ$-graded Borcherds-Kac-Moody Lie algebra $L_g$ with an action of a central extension $\widetilde{C_{\bM}(g)}$ of $C_{\bM}(g)$ by automorphisms.  Here, and for the remainder of the paper, $N$ is the level of $T_g$ as a modular function, and it is an integer multiple of $|g|$ that divides $12|g|$.  The Weyl-Kac-Borcherds denominator formula of $L_g$ is given by the infinite product identity
\[ T_g(\sigma) - T_g(-1/\tau) = p^{-1}\prod_{m \in \bZ_{>0}, n \in \frac{1}{N} \bZ} (1-p^m q^n)^{c^g_{m,Nn}(mn)} \]
where the exponents $c^g_{m,Nn}(mn)$ are the coefficients of a vector-valued modular function describing the character of the abelian intertwining algebra, and $p = e^{2 \pi i \sigma}$.  This denominator formula is equivalent to the isomorphism $H_*(E) \cong \bigwedge^*(E)$ of virtual vector spaces, where $E = \bigoplus_{m \in \bZ_{>0}, n \in \frac{1}{N}\bZ} E_{m,n}$ is the positive subalgebra of $L_g$.  By promoting this to an isomorphism of virtual $\widetilde{C_{\bM}(g)}$-representations and taking characters, we obtain a twisted denominator identity
\[ \begin{aligned}
p^{-1} &+ \sum_{m > 0} \mathrm{Tr}(h|E_{1,-1/N}) \mathrm{Tr}(h|E_{m,1/N}) p^m - \sum_{n \in \frac{1}{N}\bZ} \mathrm{Tr}(h|E_{1,n}) q^n \\
&= p^{-1} \exp \left( - \sum_{i > 0} \sum_{m > 0, n \in \frac{1}{N}\bZ} \mathrm{Tr}(h^i | E_{m,n})p^{im}q^{in}/i \right)
\end{aligned} \]
for each $h \in \widetilde{C_{\bM}(g)}$.

At this point, we haven't specified any conditions the action of $\widetilde{C_{\bM}(g)}$ must satisfy, so this is by itself not enough to prove a genus zero property.  In the special case $g=1$, this is the question of specifying a good Monster action on the Monster Lie algebra $\fm$.  As it happens, the existence of an $\bM$-action on the Monster Lie algebra $\fm$ is not very special, because the Monster Lie algebra has many Monster actions, most of which are useless.  Indeed, by Lemma \ref{lem:diagram-automorphism-group}, there is a huge group of diagram automorphisms, given by applying arbitrary linear transformations to the simple root spaces.  Since the multiplicities of these root spaces are coefficients of the $J$-function, we have an action of the infinite dimensional group $GL_1(\bC) \times GL_{196884}(\bC) \times GL_{21493760}(\bC) \times \cdots$ on $\fm$, and there are many embeddings of the Monster.  The general case where $g$ ranges over Fricke elements of $\bM$ is quite similar.

For the Monster Lie algebra $\fm$, the good action comes from the quantization functor applied to the Monster vertex operator algebra $V^\natural$, whose automorphism group is $\bM$.  The recursions then arise from the identification of all homogeneous spaces whose bidegree has the same product.  For example, by identifying both $\fm_{2,2}$ and $\fm_{1,4}$ with $V_5$ as Monster modules, we obtain a decomposition $V_5 \cong V_4 \oplus \bigwedge^2 V_2$ of Monster representations, and a consequent identity on traces.  For the monstrous Lie algebras $L_g$, the identification is sparser, and matches root spaces with certain subspaces of twisted $V^\natural$-modules.

Once we have a suitably good action of $\widetilde{C_{\bM}(g)}$ on $L_g$, we obtain identities between the values of $\mathrm{Tr}(h^i | E_{m,n})$ for the various spaces $E_{m,n}$.  After some manipulation, we find that any equivariant Hecke operator $n\hat{T}_n$ sends $Z(g,h,\tau)$ to a monic polynomial of degree $n$ in $Z(g,h,\tau)$.  From there, a suitable adaptation of the methods in \cite{K94} and \cite{CG97} yield the Hauptmodul result.

In the next section, we will describe a new method to deduce the Hecke-monic property, where we dispense with the homology of the positive Lie subalgebra $E$, and consider instead the imaginary free subalgebra $\fu^+$.  The homology of free Lie algebras is easily computed, and doesn't need any structure theory of Borcherds-Kac-Moody Lie algebras.  

\section{Jurisich's decomposition} \label{sec:decomposition}

In the course of his proof of the Monstrous Moonshine conjecture, Borcherds employed a description of the homology of the positive subalgebra of the Monster Lie algebra $\fm$ that arose from an extension of the BGG-type results of \cite{GL76} to the Borcherds-Kac-Moody setting.  The details of this extension were not fully worked out until Jurisich's thesis a few years later, and they appear in published form as \cite{J04}.  The same extension was employed in \cite{GM2} to describe the homology of the Monstrous Lie algebras $L_g$.

In the case that $g$ is Fricke, one may avoid the complications of the BGG-type theory by using the fact that the Lie algebras have a large free component.  For the $g=1$ case with the Monster Lie algebra $\fm$, the mostly-free property is made precise in \cite{J98}.  Jurisich proves a decomposition $\fm = \fu^+ \oplus \fgl_2 \oplus \fu^-$, where $\fu^+$ and $\fu^-$ are freely generated by vector spaces $\cV^+$ and $\cV^-$.  Furthermore, $\cV^+$ is a $\fgl_2$-module generated by the imaginary simple roots of $\fm$.  The use of free Lie algebras makes computation simpler, because their homology is particularly easy to understand.  In particular, for $\fm$, the homology of the positive subalgebra is supported in degrees 0,1,2, but the homology of $\fu^+$ is supported in degrees 0 and 1, with $H_0(\fu^+,\bC) = \bC$ and $H_1(\fu^+,\bC) = \cV^+$.  This decomposition thus leads to simpler descriptions of twisted denominator formulas and removes the necessity of generalizing \cite{GL76}.  For the $g=1$ case, this method was used to derive complete replicability in \cite{JLW95}.

\begin{defn}
A generalized Cartan matrix is a matrix $A = (a_{i,j})_{i,j \in I}$ of real numbers, where
\begin{enumerate}
\item $I$ is a countable set.
\item $A$ is symmetrizable, i.e., there is a diagonal matrix $Q$ with positive real diagonal entries $Q_{i,i}$ such that $QA$ is symmetric.
\item $a_{i,j} < 0$ if $i \neq j$
\item If $a_{i,i} >0$, then $a_{i,i} = 2$ and $a_{i,j} \in \bZ$ for all $j \in I$.
\end{enumerate}
The universal Borcherds-Kac-Moody Lie algebra $\fg(A)$ of a generalized Cartan matrix is the Lie algebra generated by $\{h_i, e_i, f_i\}_{i \in I}$ subject to the relations
\begin{enumerate}
\item $\mathfrak{sl}_2$ relations: $[h_i, e_k] = a_{i,k} e_k$, $[h_i, f_k] = -a_{i,k} f_k$, $[e_i,f_j] = \delta_{i,j} h_i$.
\item Serre relations: If $a_{i,i} > 0$, then $\ad(e_i)^{1-2a_{i,j}}(e_j) = \ad(f_i)^{1-2a_{i,j}}(f_j) = 0$.
\item Orthogonality: If $a_{i,j} = 0$, then $[e_i, e_j] = [f_i,f_j] = 0$.
\end{enumerate}
A Borcherds-Kac-Moody algebra is a Lie algebra of the form $(\fg(A)/C).D$, where $C$ is a central ideal, and $D$ is a commutative Lie algebra of outer derivations.
\end{defn}

Borcherds-Kac-Moody algebras are a generalization of finite dimensional semisimple Lie algebras, where one relaxes some finiteness conditions and allows for simple roots with non-positive norm.  They are more or less characterized by the property that they are long and thin, i.e., all of the interesting cases admit a $\bZ$-grading with finite dimensional pieces.

\begin{defn}
A Lie algebra $\fg$ is Fricke if it
\begin{enumerate}
\item is Borcherds-Kac-Moody of rank 2 (i.e., the Cartan subalgebra is 2-dimensional),
\item has 1 real simple root,
\item has no norm zero simple roots,
\item admits a norm zero Weyl vector $\rho$ (meaning all simple roots $r$ satisfy $(r,\rho) = -r^2$), and
\item has only finitely many simple roots of norm greater than any fixed bound (i.e., for any $N>0$, only finitely many simple roots $r$ satisfying $(r,r) > -N$).
\end{enumerate}
\end{defn}

\begin{lem} \label{lem:basic-Fricke-properties} 
Let $\fg$ be a Fricke Lie algebra.  Then there is a decomposition as $\fg = \fu^+ \oplus \fgl_2 \oplus \fu^-$, where $\fu^+$ (resp. $\fu^-$) is freely generated by the underlying vector space of the $\fgl_2$-module $\cV^+$ (resp. $\cV^-$) that is generated (as a $\fgl_2$-module) by the imaginary simple roots of $\fg$ (resp. by the negatives of the imaginary simple roots).  Furthermore, $\fg$ admits a $\bZ \times \bZ$-grading, where the real simple root has degree $(1,-1)$, and the imaginary simple roots have degree $(1,n)$ for $n \geq 1$, and finite multiplicity.  
\end{lem}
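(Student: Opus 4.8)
The plan is to extract the entire structure from the norm-zero Weyl vector and then to reduce the decomposition itself to Jurisich's free-subalgebra theorem. I would start from the triangular decomposition $\fg = \fg^- \oplus \fh \oplus \fg^+$ attached to the generalized Cartan matrix, with $\fh$ the two-dimensional Cartan and $\fg^{\pm}$ generated by the $e_i$ and $f_i$ respectively. Because there is exactly one real simple root, its $\fsl_2$-triple $e_0, f_0, h_0$ together with all of $\fh$ spans a four-dimensional subalgebra isomorphic to $\fgl_2$, with Cartan $\fh$, and this will be the middle summand.

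Next I would pin down the grading geometrically. On the two-dimensional real span $V$ of the roots the form has signature $(1,1)$, since there is a norm $2$ real root and negative-norm imaginary roots, and $\rho$ is a nonzero null vector. Choosing null coordinates $u, v$ with $(u,v) = -1$ and $\rho = \lambda u$, the identity $(r,\rho) = -r^2$ evaluated on a simple root $r = x u + y v$ reads $-\lambda y = 2 x y$; as no simple root is null we have $y \neq 0$, so every simple root shares the same first coordinate $x$, which I normalize to $1$. Writing $d_1, d_2$ for the two null coordinates, all simple roots thus lie on the affine line $d_1 = 1$. Since the real root has norm $2$ we have $a_{0i} = (r_0, r_i) \in \bZ$, which forces $d_2 \in \bZ$ there; norm $2$ makes the real root $(1,-1)$, and negative norm together with integrality makes each imaginary simple root $(1,n)$ with $n \geq 1$. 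The finiteness axiom applied at the fixed norm $-2n$ gives finite multiplicity at each $n$. This is the asserted $\bZ \times \bZ$-grading, and for the single grading $D = d_1 + d_2$ one checks $\fg_{D = 0} = \fgl_2$, while $\fu^+ := \fg_{D > 0}$ and $\fu^- := \fg_{D < 0}$ are subalgebras giving the vector-space decomposition $\fg = \fu^- \oplus \fgl_2 \oplus \fu^+$.

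The substance is that $\fu^{\pm}$ are free. I would first identify $\cV^+$ as the $\fgl_2$-submodule of $\fg^+$ generated by the imaginary simple root vectors. Each such $e_i$, at a root $(1,n)$, is a lowest weight vector: $[f_0, e_i]$ would sit in degree $(0, n+1)$, which is not a root because $d_1 = 0$ occurs only in $\fh$, so $[f_0, e_i] = 0$, and the real-root Serre relation then truncates the $\fgl_2$-module it generates to a finite-dimensional one. Hence $\cV^+$ is a locally finite $\fgl_2$-module, a sum of finite-dimensional pieces indexed by the imaginary simple roots, and $\fu^+$ is exactly the subalgebra (indeed the ideal of $\fg^+$) that it generates. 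Two observations then put us in Jurisich's setting: distinct imaginary simple roots, both on $d_1 = 1$, satisfy $((1,m),(1,n)) = -(m+n) < 0$, so no two are orthogonal and there are no orthogonality relations among them, while their negative norms preclude Serre relations; thus the only defining relations meeting the imaginary generators are the real-root Serre relations, already consumed in forming $\cV^+$. The decomposition and freeness for $\fu^-$ follow identically using the negatives of the imaginary simple roots.

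The main obstacle is the freeness claim itself, i.e. proving that the canonical surjection from the free Lie algebra on $\cV^+$ onto $\fu^+$ is injective once the real Serre relations have been absorbed. I expect to establish this exactly as Jurisich does in the Monster case, working with the defining ideal of $\fg^+$ and the semidirect structure $\fg^+ = \fu^+ \rtimes \bC e_0$ in which the quotient is one-dimensional; the delicate point, relative to her explicit computation, is to confirm that the bare Fricke axioms --- one real simple root, no null simple roots, and imaginary simple roots lying on $d_1 = 1$ with pairwise negative inner products --- reproduce precisely the combinatorial input her argument needs, so that it transfers without change. Once freeness holds, the homology of $\fu^+$ is automatically concentrated in degrees $0$ and $1$, with $H_0(\fu^+, \bC) = \bC$ and $H_1(\fu^+, \bC) = \cV^+$, which is exactly the simplification that the later twisted-denominator and Hecke-monic arguments rely on.
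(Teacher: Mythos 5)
Your proposal is correct and follows essentially the same route as the paper: the norm-zero Weyl vector confines all simple roots to the line $x=1$, integrality places the imaginary simple roots at $(1,n)$ with $n \geq 1$ (you invoke the Cartan-matrix axiom $a_{0j} \in \bZ$ directly, the paper phrases it via finite-dimensional $\fgl_2$-modules and finite $\alpha_0$-strings, which is the same computation), finite multiplicity comes from the last axiom, and the decomposition with freeness of $\fu^{\pm}$ on $\cV^{\pm}$ is Jurisich's theorem (the paper also separately disposes of the degenerate case with no imaginary simple roots, where $\fg \cong \fgl_2$ and your signature-$(1,1)$ setup does not literally apply). The one step you flag as the main obstacle --- whether Jurisich's freeness argument transfers beyond the Monster Lie algebra --- is not actually an obstacle: Corollary 5.1 of \cite{J98} is proved for an arbitrary generalized Kac-Moody algebra relative to any splitting of the simple roots into a Kac-Moody part and the remaining (imaginary) part, so the paper simply cites its rank-2 case, noting moreover that this half requires no Weyl-vector hypothesis at all; none of the Fricke-specific combinatorics you propose to re-verify is needed for that citation to apply.
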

\begin{proof}
The first claim, about the decomposition, is the rank 2 case of Corollary 5.1 in \cite{J98} (and does not use the Weyl vector hypothesis).

For the claim about the grading and simple roots, we note that if $\fg$ has no imaginary simple roots, we just have $\fg \cong \fgl_2$, and the conclusion is clear.  If $\fg$ has imaginary simple roots, then the root space is Lorentzian.  Without loss of generality, we may identify this space with $\bR^{1,1}$ with quadratic form $(a,b)^2 = -ab$, and place the Weyl vector at $(-1,0)$.  This forces all simple roots to lie on the line $\{(1,y)\}_{y \in \bR}$.  The real simple root $\alpha_0$ then lies at $(1,-1)$, since the nontrivial Weyl group element switches the $x$-axis and $y$-axis.  The imaginary simple roots then must lie at integer coordinates, because they generate finite dimensional $\fgl_2$ representations under the adjoint action, and finite-length $\alpha_0$-chains must start at integer coordinates.  The finite multiplicity follows from the last defining condition.
\end{proof}

Lemma 4.1 of \cite{GM2} asserts that for any power series of the form $f(q) = q^{-1} + O(q) \in q^{-1}\bZ[[q]]$ with non-negative integer coefficients, there is a Fricke Lie algebra whose denominator formula has the form
\[ f(p) - f(q) = p^{-1} \prod_{m \in \bZ_{>0}, n \in \bZ} (1-p^m q^n)^{c(m,n)}, \]
where $c(m,n)$ is the multiplicity of the root $(m,n) \in \bZ \times \bZ$, and the Lie algebra is unique up to isomorphism.  The following proposition is a converse, so we get a bijection between isomorphism classes of Fricke Lie algebras and power series of the above form.

\begin{prop}
Let $\fg$ be a Fricke Lie algebra, and let $f(q) \in \bZ((q)) = \sum_{n \in \bZ} c(1,n) q^n$, where $c(1,n)$ is the multiplicity of the simple root $(1,n)$.  Then the Weyl-Kac-Borcherds denominator formula for $\fg$ is
\[ \prod_{m \in \bZ_{>0}, n \in \bZ} (1-p^m q^n)^{c(m,n)} = p(f(p)-f(q)) \]
where $c(m,n)$ is the multiplicity of the root $(m,n) \in \bZ \times \bZ$.
\end{prop}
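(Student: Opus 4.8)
The plan is to reduce the denominator formula to a homology computation for the free Lie algebra $\fu^+$, using the decomposition of Lemma \ref{lem:basic-Fricke-properties}. Write $\alpha_0 = (1,-1)$ for the real simple root and set $e^{(m,n)} = p^m q^n$. Since every simple root of $\fg$ has first coordinate $1$, every positive root $(m,n)$ has $m \geq 1$, and the only positive root lying in the $\fgl_2$-summand is $\alpha_0$ itself, as $\fgl_2$ contributes its Cartan in degree $(0,0)$ and $e_{\alpha_0}$ in degree $(1,-1)$. Hence $\dim \fg_{(m,n)} = \dim \fu^{+}_{(m,n)}$ for all positive $(m,n) \neq \alpha_0$, while $\dim \fg_{\alpha_0} = c(1,-1) = 1$ with $\fu^{+}_{\alpha_0} = 0$, so the product over positive roots factors as
\[ \prod_{m \in \bZ_{>0},\, n \in \bZ} (1 - p^m q^n)^{c(m,n)} = (1 - pq^{-1}) \prod_{\beta}(1 - e^\beta)^{\dim \fu^{+}_{\beta}}, \]
the second product running over the roots $\beta$ of $\fu^+$. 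All identities are read in the completed ring $\bZ((q))[[p]]$, where convergence is clear because every factor carries $m \geq 1$.

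Next I would evaluate the $\fu^+$-product by the Euler--Poincar\'e principle. Since $\fu^+$ is freely generated by $\cV^+$, its homology is $H_0(\fu^+,\bC) = \bC$ and $H_1(\fu^+,\bC) = \cV^+$ with $H_i = 0$ for $i \geq 2$, as recalled after Lemma \ref{lem:basic-Fricke-properties}. Applying Euler--Poincar\'e to the Chevalley--Eilenberg complex $\bigwedge^{\ast}\fu^+$ in the graded Grothendieck group gives
\[ \prod_{\beta}(1 - e^\beta)^{\dim \fu^{+}_{\beta}} = \sum_{i \geq 0} (-1)^i \operatorname{ch} H_i(\fu^+,\bC) = 1 - \operatorname{ch} \cV^+. \]

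The crux is to compute $\operatorname{ch}\cV^+$ explicitly, and I expect this to be the main obstacle. I would use that $\cV^+$ is generated as an $\fgl_2$-module by the imaginary simple root spaces $\fg_{(1,n)}$, $n \geq 1$, each of dimension $c(1,n)$. Each such space is a lowest-weight space for the $\fsl_2$ attached to $\alpha_0$: the relation $[e_i,f_j] = \delta_{i,j}h_i$ forces $\ad(f_{\alpha_0})$ to annihilate $\fg_{(1,n)}$ since $(1,n) \neq \alpha_0$. Because $\alpha_0$ is a real simple root the $\fsl_2$-action is integrable, so each generator spans a finite-dimensional irreducible module $V_n$ obtained by applying $\ad(e_{\alpha_0})$, which raises the bidegree by $\alpha_0 = (1,-1)$. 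By $\fsl_2$-theory the string of weights is symmetric under $s_{\alpha_0}$, which the proof of Lemma \ref{lem:basic-Fricke-properties} identifies with $(a,b) \mapsto (b,a)$; hence the string runs from its bottom $(1,n)$ to its top $s_{\alpha_0}(1,n) = (n,1)$, occupying the $n$ bidegrees $(k, n+1-k)$, $k = 1,\dots,n$, each with multiplicity one. Since $V_n$ lies entirely on the antidiagonal $a+b = n+1$, the contributions for distinct $n$ are disjoint, and summing over the $c(1,n)$ generators yields
\[ \operatorname{ch}\cV^+ = \sum_{n \geq 1} c(1,n) \sum_{k=1}^{n} p^k q^{n+1-k}. \]
The delicate point here is being certain that the $\fgl_2$-module generated by the imaginary simple roots is \emph{exactly} this direct sum of irreducibles, with no further components and the stated multiplicities; this is where the structure theory behind Lemma \ref{lem:basic-Fricke-properties} does the real work.

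Finally I would assemble the pieces and verify the algebra by a routine telescoping computation. Substituting the character into the factored product, multiplication by $1 - pq^{-1}$ shifts the inner index range from $\{1,\dots,n\}$ to $\{2,\dots,n+1\}$, so for each fixed $n$ the two copies cancel except at the extreme terms, leaving $p^{n+1} - pq^n$. Thus the right-hand side collapses to
\[ 1 - pq^{-1} + \sum_{n \geq 1} c(1,n)\bigl(p^{n+1} - pq^n\bigr), \]
which, since $f(q) = q^{-1} + \sum_{n \geq 1} c(1,n) q^n$, is precisely $p\bigl(f(p) - f(q)\bigr)$. This would complete the proof.
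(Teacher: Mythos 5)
Your proof is correct and follows essentially the same route as the paper: both pass through Jurisich's decomposition $\fg = \fu^+ \oplus \fgl_2 \oplus \fu^-$, the identity $\dim \cV^+_{m,n} = c(1,m+n-1)$ coming from the $\fgl_2$-module structure of $\cV^+$, and the same final telescoping computation against the factor $(1-pq^{-1})$. The only difference is one of packaging: where the paper quotes Corollary 5.2 of \cite{J98} for the intermediate identity $\prod_{\alpha \in \Delta_+}(1-e^{\alpha})^{c(\alpha)} = (1-pq^{-1})\bigl(1-\operatorname{ch}\cV^+\bigr)$, you re-derive it by hand from the Euler--Poincar\'e principle for the free Lie algebra $\fu^+$ together with the explicit $\fsl_2$-strings generated by the imaginary simple roots, which is precisely the content of that corollary (and your flagged ``delicate point'' about $\cV^+$ being exactly the direct sum of those irreducibles is the same structural fact the paper invokes, stated there as Proposition \ref{prop:group-action-on-Lie algebra}).
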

\begin{proof}
From Corollary 5.2 in \cite{J98}, the denominator formula is given by
\[ \prod_{\alpha \in \Delta_+} (1-e^\alpha)^{c(\alpha)} = \left(\sum_{w \in S_2} (-1)^{\epsilon(w)}e^{w\rho - \rho} \right) \left(1-\sum_{\alpha \in \Delta^{im}_+} r_\alpha e^\alpha \right), \]
where $r_\alpha$ is the multiplicity of $\alpha$ in $\cV^+$, $c(\alpha)$ is the multiplicity of $\alpha$ in $\fg$, and $\Delta^{im}_+$ is the set of positive imaginary roots.  By the $\bZ \times \bZ$-grading given in Lemma \ref{lem:basic-Fricke-properties}, we may set $e^\alpha = p^m q^n$ for $\alpha = (m,n)$ to get
\[ \prod_{m \in \bZ_{>0}, n \in \bZ} (1-p^m q^n)^{c(m,n)} = (1-pq^{-1})\left(1-\sum_{m>0,n>0} r_{m,n} p^m q^n\right). \]
We know that $r_{m,n} = r_{1,m+n-1}$ by virtue of the $\fgl_2$-structure of $\cV^+$.  The product on the right side therefore gives a telescoping sum that yields the formula we want.
\end{proof}

From these results, we see that Fricke Lie algebras are quite easy to describe recursively.  The Cartan matrix has the following block decomposition with constant blocks:
\[ \left( \begin{array}{ccccc}
2_{1 \times 1} & 0_{1 \times c(1,1)} & -1_{1 \times c(1,2)} & -2_{1 \times c(1,3)} & \cdots \\ 
0_{c(1,1) \times 1} & -2_{c(1,1) \times c(1,1)} & -3_{c(1,1) \times c(1,2)} & -4_{c(1,1) \times c(1,3)} & \cdots \\
-1_{c(1,2) \times 1} & -3_{c(1,2) \times c(1,1)} & -4_{c(1,2) \times c(1,2)} & -5_{c(1,2) \times c(1,3)} & \cdots \\
-2_{c(1,3) \times 1} & -4_{c(1,3) \times c(1,1)} & -5_{c(1,3) \times c(1,2)} & -6_{c(1,3) \times c(1,3)} & \cdots \\
\vdots & \vdots & \vdots & \vdots & \ddots
\end{array} \right). \]

As a $\bZ \times \bZ$-graded vector space, we have the following decomposition:
\[ \begin{array}{ccccccccc}
&  & & & & \vdots & \vdots & \vdots \\
& & & & & \bC^{c(1,3)} & \bC^{c(2,3)} & \bC^{c(3,3)} & \cdots \\
& & & & & \bC^{c(1,2)} & \bC^{c(2,2)} & \bC^{c(3,2)} & \cdots \\
& & & \bC & & \bC^{c(1,1)} & \bC^{c(2,1)} & \bC^{c(3,1)} & \cdots \\
& & & & \bC^2 & & & & \\
\cdots & \bC^{c(-3,-1)} & \bC^{c(-2,-1)} & \bC^{c(-1,-1)} & & \bC & & & \\
\cdots & \bC^{c(-3,-2)} & \bC^{c(-2,-2)} & \bC^{c(-1,-2)} & & & & & \\
\cdots & \bC^{c(-3,-3)} & \bC^{c(-2,-3)} & \bC^{c(-1,-3)} & & & & & \\
& \vdots & \vdots & \vdots & & & & 
\end{array} , \]
and the simple roots are spaces with $x$-coordinate 1, i.e., those in the column containing $\bC^{c(1,n)}$.  We also have the following decomposition of $\cV^+$ as a $\bZ_{>0} \times \bZ_{>0}$-graded vector space:
\[ \begin{array}{cccccc}
\vdots & \vdots & \vdots & \vdots & \vdots \\
\bC^{c(1,4)} & \bC^{c(1,5)} & \bC^{c(1,6)} & \bC^{c(1,7)} & \bC^{c(1,8)} & \cdots \\
\bC^{c(1,3)} & \bC^{c(1,4)} & \bC^{c(1,5)} & \bC^{c(1,6)} & \bC^{c(1,7)} & \cdots \\
\bC^{c(1,2)} & \bC^{c(1,3)} & \bC^{c(1,4)} & \bC^{c(1,5)} & \bC^{c(1,6)} & \cdots \\
\bC^{c(1,1)} & \bC^{c(1,2)} & \bC^{c(1,3)} & \bC^{c(1,4)} & \bC^{c(1,5)} & \cdots \\
\end{array} \]

\begin{prop} \label{prop:group-action-on-Lie algebra}
If $\fg$ is a Fricke Lie algebra, and $H$ is a group acting on $\fg$ by homogeneous automorphisms, then we have the following isomorphism of $H$-modules:
\[ \cV^+ \cong \fg_{1,-1} \oplus \bigoplus_{n =1}^\infty \bigoplus_{k=1}^n \fg_{1,n} \otimes \fg_{1,-1}^{\otimes k-1}. \]
In particular, the part of $\cV^+$ of degree $(m,n)$ is isomorphic to $\fg_{1,m+n-1} \otimes \fg_{1,-1}^{\otimes m-1}$.
\end{prop}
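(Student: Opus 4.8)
I would read off the $\fgl_2$-module structure of $\cV^+$ that is already implicit in the proof of Lemma \ref{lem:basic-Fricke-properties}, and then upgrade the resulting identifications of weight spaces to $H$-module isomorphisms. Write $e_0$ for a generator of the real simple root space $\fg_{1,-1}$ and $f_0$ for a generator of $\fg_{-1,1}$. Each imaginary simple root space $\fg_{1,N}$ with $N\ge 1$ lies in bidegree $(1,N)$ and is annihilated by $\ad(f_0)$, since $\ad(f_0)$ lowers the bidegree to $(0,N+1)$, where $\fg$ vanishes. Thus every vector of $\fg_{1,N}$ is a lowest-weight vector for the $\fsl_2\subset\fgl_2$ attached to the real root, and the finite-dimensional module it generates is spanned by the $e_0$-string $\fg_{1,N},\,\ad(e_0)\fg_{1,N},\,\dots,\,\ad(e_0)^{N-1}\fg_{1,N}$, occupying bidegrees $(1,N),(2,N-1),\dots,(N,1)$. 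Hence $\cV^+$ is the direct sum over $N\ge 1$ of these strings, and in bidegree $(m,n)$ one has the equality of subspaces $\cV^+_{(m,n)}=\ad(e_0)^{m-1}\,\fg_{1,m+n-1}$, which makes sense precisely because $1\le m\le m+n-1$ whenever $n\ge 1$.

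Next I would make the map $\ad(e_0)^{m-1}$ canonical and $H$-equivariant. Because $H$ acts by homogeneous automorphisms, it preserves each graded piece and the canonical summand $\fgl_2$, hence preserves $\cV^+$; in particular it acts on the line $\fg_{1,-1}$ by a character $\chi\colon H\to\bC^\times$, so $e_0$ is not $H$-fixed and $\ad(e_0)^{m-1}$ is equivariant only up to $\chi^{m-1}$. To repair this I would use the iterated bracket
\[ \fg_{1,m+n-1}\times\underbrace{\fg_{1,-1}\times\cdots\times\fg_{1,-1}}_{m-1}\longrightarrow\fg_{m,n},\qquad (v,x_1,\dots,x_{m-1})\mapsto [x_1,[\dots,[x_{m-1},v]\dots]], \]
which is multilinear, hence factors through the tensor product, and is manifestly $H$-equivariant since $g\cdot[x,y]=[gx,gy]$ for $g\in H$. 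It therefore descends to an $H$-equivariant linear map
\[ \phi_{m,n}\colon \fg_{1,m+n-1}\otimes\fg_{1,-1}^{\otimes m-1}\longrightarrow \cV^+_{(m,n)}. \]

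I would then verify that $\phi_{m,n}$ is an isomorphism. Its image is $\ad(e_0)^{m-1}\fg_{1,m+n-1}=\cV^+_{(m,n)}$, so it is surjective. Since $\fg_{1,-1}$ is one-dimensional, the source has dimension $\dim\fg_{1,m+n-1}$, and injectivity of $\ad(e_0)^{m-1}$ on the lowest-weight space $\fg_{1,m+n-1}$ is the standard $\fsl_2$-fact for the $(m+n-1)$-dimensional irreducible: raising carries the lowest weight space isomorphically $m-1$ steps up as long as $m-1\le(m+n-1)-1$, i.e. $n\ge 1$. Thus $\phi_{m,n}$ is an $H$-module isomorphism, giving the ``in particular'' clause. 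Summing these degreewise isomorphisms over the support $m,n\ge 1$ of $\cV^+$, and collecting for each fixed imaginary simple root space $\fg_{1,n}$ the contributions $\phi_{k,\,n-k+1}\cong\fg_{1,n}\otimes\fg_{1,-1}^{\otimes k-1}$ for $k=1,\dots,n$, reproduces the claimed direct-sum decomposition of $\cV^+$.

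The only genuine obstacle is the equivariance bookkeeping in the middle step: the naive identification of $\cV^+_{(m,n)}$ with $\fg_{1,m+n-1}$ depends on a choice of $e_0$ and is equivariant only up to $\chi^{m-1}$, and the tensor factor $\fg_{1,-1}^{\otimes m-1}$ exists exactly to absorb this character. The remaining ingredients are routine: the rank-two $\fsl_2$-string analysis already used for Lemma \ref{lem:basic-Fricke-properties}, and the freeness of $\fu^+$ on $\cV^+$, which forces $\fg_{1,N}=\cV^+_{(1,N)}$ for $N\ge 1$ (a bracket of at least two generators has first coordinate at least $2$), so that $\fg_{1,m+n-1}$ really is the imaginary simple root space that seeds each string.
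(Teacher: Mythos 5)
Your proof is correct and is essentially the paper's own argument: the paper disposes of this proposition in one line (``this follows straightforwardly from the adjoint $\fgl_2$-action on $\cV^+$''), and your $\fsl_2$-string analysis plus the iterated-bracket map that absorbs the character on $\fg_{1,-1}$ into the tensor factor $\fg_{1,-1}^{\otimes m-1}$ is exactly the intended content of that line. One remark: your decomposition (correctly) has no standalone $\fg_{1,-1}$ summand --- since $\cV^+$ is generated by the imaginary simple roots and is supported in bidegrees $(m,n)$ with $m,n\ge 1$, the leading $\fg_{1,-1}$ term in the paper's displayed formula appears to be spurious, and it is your degreewise version that the paper actually uses later (e.g.\ in Proposition \ref{prop:first-twisted-denominator}).
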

\begin{proof}
This follows straightforwardly from the adjoint $\fgl_2$-action on $\cV^+$.
\end{proof}

By following the argument of \cite{JLW95}, we can now form a twisted denominator formula.  Given a homogeneous action of a group $H$ of $\fg$, the Euler-Poincare identity $H(\fu^+,\bC) \cong \bigwedge \fu^+$ yields the equality $\log H(\fu^+,\bC) = \log \bigwedge \fu^+$ in $(R(H) \otimes \bQ)[[p,q]]$.  We may then expand the left side as $\log H(\fu^+,\bC) = \log(1 - \cV^+) = -\sum_{k=1}^\infty \frac{1}{k} (\cV^+)^k$ and the right side as $\log \bigwedge \fu^+ = -\sum_{k=1}^\infty \frac{1}{k} \psi^k(\fu^+)$, where $\psi^k$ is the $k$th Adams operation.

\begin{prop} \label{prop:first-twisted-denominator}
If $\fg$ is a Fricke Lie algebra, and $H$ is a group acting on $\fg$ by homogeneous automorphisms, then for any $h \in H$, we have the following trace relation:
\[ \sum_{k=1}^\infty \frac{1}{k} \left(\sum_{m,n=1}^\infty \Tr(h|\fg_{1,m+n-1}) \cdot \Tr(h|\fg_{1,-1})^{m-1} p^m q^n \right)^k = \sum_{m,n=1}^\infty \sum_{k=1}^\infty \frac{1}{k} \Tr(h^k | \fg_{m,n}) p^{km} q^{kn} \]
\end{prop}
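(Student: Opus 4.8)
The plan is to follow the strategy of \cite{JLW95} indicated just before the statement: the asserted relation is the image, under the graded-character map $h\mapsto\mathrm{ch}_h$, of the Euler--Poincar\'e identity for the \emph{free} Lie algebra $\fu^+$. All manipulations take place in the completed representation ring $(R(H)\otimes\bQ)[[p,q]]$, in which a homogeneous $H$-module supported in bidegree $(m,n)$ is recorded as its class multiplied by $p^mq^n$.

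First I would assemble the homological input. Because $\fu^+$ is freely generated by $\cV^+$ (Lemma \ref{lem:basic-Fricke-properties}), its homology with trivial coefficients is concentrated in degrees $0$ and $1$, with $H_0(\fu^+,\bC)=\bC$ and $H_1(\fu^+,\bC)=\cV^+$ as $H$-modules. Thus the virtual class $\sum_i(-1)^iH_i(\fu^+,\bC)$ equals $1-\cV^+$, while the Chevalley--Eilenberg complex identifies the same virtual class with $\sum_i(-1)^i\bigwedge^i\fu^+=\bigwedge\fu^+$; this is the equality $1-\cV^+=\bigwedge\fu^+$ in the completed ring.

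Next I would take logarithms of both sides, as already recorded before the statement: the left side expands as $\log(1-\cV^+)=-\sum_{k\ge1}\frac1k(\cV^+)^k$ using ordinary ring powers, and the right side as $\log\bigwedge\fu^+=-\sum_{k\ge1}\frac1k\psi^k(\fu^+)$ via the Adams operations $\psi^k$. Applying $\mathrm{ch}_h$ at a fixed $h\in H$ converts these into power-series identities. Since $\mathrm{ch}_h$ is a ring homomorphism, the left side becomes $-\sum_k\frac1k\,\mathrm{ch}_h(\cV^+)^k$; and the defining property of the Adams operation, $\mathrm{ch}_h(\psi^k(W))=\sum_{m,n}\Tr(h^k|W_{m,n})p^{km}q^{kn}$, turns the right side into $-\sum_k\frac1k\sum_{m,n}\Tr(h^k|\fu^+_{m,n})p^{km}q^{kn}$. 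Cancelling the common sign gives the claimed identity with $\cV^+$ and $\fu^+$ in place of the stated data.

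It remains to rewrite both sides in the stated form. On the left, Proposition \ref{prop:group-action-on-Lie algebra} identifies the bidegree $(m,n)$ part of $\cV^+$ with $\fg_{1,m+n-1}\otimes\fg_{1,-1}^{\otimes m-1}$, so that $\mathrm{ch}_h(\cV^+)=\sum_{m,n\ge1}\Tr(h|\fg_{1,m+n-1})\Tr(h|\fg_{1,-1})^{m-1}p^mq^n$, which is precisely the inner sum of the proposition. On the right, the decomposition $\fg=\fu^+\oplus\fgl_2\oplus\fu^-$ together with the bigrading of Lemma \ref{lem:basic-Fricke-properties} shows $\fu^+_{m,n}=\fg_{m,n}$ for all $m,n\ge1$: indeed $\fgl_2$ is supported in bidegrees $(0,0),(1,-1),(-1,1)$ and $\fu^-$ only in bidegrees with $m\le-1$, so neither meets the quadrant $m,n\ge1$. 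Substituting these identifications yields exactly the two sides of the statement. The one point demanding care is purely formal: one must confirm that the positivity of the bigrading (every bidegree occurring in $\cV^+$ and $\fu^+$ has $m,n\ge1$) makes each coefficient of $p^mq^n$ a finite sum, so that the logarithms, their series expansions, and the reordering of the $k$-, $m$-, and $n$-summations are all legitimate in $(R(H)\otimes\bQ)[[p,q]]$; this is the only place where I expect to spend any real effort.
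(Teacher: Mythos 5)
Your proposal is correct and is essentially identical to the paper's argument: the paper's proof consists of applying $\Tr(h|\cdot)$ to the identity $-\sum_{k\ge1}\frac{1}{k}(\cV^+)^k=-\sum_{k\ge1}\frac{1}{k}\psi^k(\fu^+)$, obtained from the Euler--Poincar\'e identity $H(\fu^+,\bC)\cong\bigwedge\fu^+$ for the free Lie algebra $\fu^+$ as set up just before the statement, and then invoking the $H$-module decomposition of Proposition \ref{prop:group-action-on-Lie algebra}. The only substance you add beyond the paper's one-sentence proof is the explicit verification that $\fu^+_{m,n}=\fg_{m,n}$ for $m,n\ge1$ and the finiteness of coefficients in $(R(H)\otimes\bQ)[[p,q]]$, both of which the paper leaves implicit.
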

\begin{proof}
This follows from applying the trace of $h$ to the identity
\[ -\sum_{k=1}^\infty \frac{1}{k} (\cV^+)^k = -\sum_{k=1}^\infty \frac{1}{k} \psi^k(\fu^+) \]
and applying the $H$-module decomposition in Proposition \ref{prop:group-action-on-Lie algebra}.
\end{proof}

\begin{cor} \label{cor:second-twisted-denominator}
Let $\fg$ be a Fricke Lie algebra, and let $H$ be a group acting on $\fg$ by homogeneous automorphisms.  For any $h \in H$, we let $\zeta_h = \Tr(h|\fg_{1,-1})$, and define $f_h(q) = \frac{1}{\zeta_h} \sum_{n \in \bZ} \Tr(h|\fg_{1,n}) q^n$.  Then we have the following trace relation:
\[ \log \frac{f_h(\zeta_h p)-f_h(q)}{(\zeta_h p)^{-1}-q^{-1}} = - \sum_{m,n=1}^\infty \sum_{k=1}^\infty \frac{1}{k} \Tr(h^k | \fg_{m,n}) p^{km} q^{kn} \]
\end{cor}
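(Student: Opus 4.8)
The plan is to derive the corollary directly from Proposition \ref{prop:first-twisted-denominator} by recognizing its left-hand side as a logarithm. Write $X = \sum_{m,n\geq 1}\Tr(h|\fg_{1,m+n-1})\,\zeta_h^{m-1}\,p^m q^n$ for the inner power series appearing in Proposition \ref{prop:first-twisted-denominator}, so that that proposition reads $\sum_{k\geq 1}\frac1k X^k = \sum_{m,n\geq 1}\sum_{k\geq 1}\frac1k\Tr(h^k|\fg_{m,n})p^{km}q^{kn}$. Since $\sum_{k\geq 1}\frac1k X^k = -\log(1-X)$, the right-hand side of the corollary is exactly $\log(1-X)$. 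Thus it suffices to establish the purely formal identity $\frac{f_h(\zeta_h p)-f_h(q)}{(\zeta_h p)^{-1}-q^{-1}} = 1-X$ in $(R(H)\otimes\bQ)[[p,q]]$, after which taking logarithms finishes the proof.

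First I would unpack $f_h$ using Lemma \ref{lem:basic-Fricke-properties}. The only simple-root spaces with $x$-coordinate $1$ sit at $(1,-1)$ (the one-dimensional real root space, on which $h$ acts by the scalar $\zeta_h$) and at $(1,n)$ for $n\geq 1$; there is no root at $(1,0)$. Hence $\Tr(h|\fg_{1,0})=0$ and $f_h(q) = q^{-1} + \sum_{n\geq 1} a_n q^n$ with $a_n = \zeta_h^{-1}\Tr(h|\fg_{1,n})$. Note that $\zeta_h\neq 0$ because $h$ acts invertibly on the one-dimensional space $\fg_{1,-1}$, so $f_h$ and the quotient above are well defined, and $\zeta_h$ is a unit in the relevant coefficient ring.

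Next I would compute the quotient directly. Setting $P=\zeta_h p$ and using $P^{-1}-q^{-1} = (q-P)/(Pq)$ together with the factorization $P^n-q^n = (P-q)\sum_{j=0}^{n-1}P^{n-1-j}q^j$, the term $q^{-1}$ of $f_h$ contributes $1$ while each $a_n q^n$ contributes $-a_n\sum_{j=0}^{n-1}P^{n-j}q^{j+1}$. Reindexing by $m=n-j$ and $n'=j+1$, so that $m+n'-1=n$ and both indices run over $\bZ_{>0}$, collapses the double sum to $\frac{f_h(\zeta_h p)-f_h(q)}{(\zeta_h p)^{-1}-q^{-1}} = 1 - \sum_{m,n'\geq 1} a_{m+n'-1}\,P^m q^{n'}$. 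Substituting $P=\zeta_h p$ and $a_{m+n'-1}=\zeta_h^{-1}\Tr(h|\fg_{1,m+n'-1})$ turns each summand into $\Tr(h|\fg_{1,m+n'-1})\,\zeta_h^{m-1}p^m q^{n'}$, whose sum is precisely $X$, giving $1-X$ as required.

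There is no serious obstacle here: the content is the elementary observation that the Grunsky/bivarial transform of $f_h$ is exactly the generating function of simple-root traces packaged in Proposition \ref{prop:first-twisted-denominator}, recovering Norton's bivarial transform at the Lie-algebra level. The only points demanding care are the bookkeeping of the $\zeta_h$ powers---which arise because scaling the $p$-variable by $\zeta_h$ converts the real-root factor $\Tr(h|\fg_{1,-1})^{m-1}$ into the clean normalization built into $f_h$---and the verification that $\zeta_h$ is a unit, so that all the formal series and their logarithms are legitimate elements of $(R(H)\otimes\bQ)[[p,q]]$.
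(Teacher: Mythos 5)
Your proof is correct and follows essentially the same route as the paper's: both reduce the identity to Proposition \ref{prop:first-twisted-denominator} by computing the bivarial (Grunsky) quotient explicitly, showing it equals $1 - \sum_{m,n \geq 1} \Tr(h|\fg_{1,m+n-1})\,\zeta_h^{m-1} p^m q^n$ after the rescaling $p \mapsto \zeta_h p$, and then taking logarithms. The only difference is cosmetic bookkeeping --- you substitute $P = \zeta_h p$ before expanding the quotient, while the paper expands first and rescales at the end --- and your explicit remarks that $\fg_{1,0} = 0$ and $\zeta_h \neq 0$ are worthwhile points the paper leaves implicit.
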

\begin{proof}
We expand the defining formula for Grunsky coefficients: 
\[ \begin{aligned}
\log \frac{f_h(p)-f_h(q)}{p^{-1}-q^{-1}} &= \log \frac{1 + \frac{1}{\zeta_h} \sum_{m = 1}^\infty \Tr(h|\fg_{1,m}) p^{m+1} - \frac{1}{\zeta_h} \sum_{n \in \bZ} \Tr(h|\fg_{1,n}) pq^n}{1-pq^{-1}} \\
&= \log \left( 1 - \sum_{n=1}^\infty \frac{\Tr(h|\fg_{1,n})}{\zeta_h} \sum_{r=0}^{n-1} p^{r+1} q^{n-r} \right) \\
&= \log \left( 1 - \sum_{m,n=1}^\infty \frac{\Tr(h|\fg_{1,m+n-1})}{\zeta_h} p^m q^n \right)
\end{aligned} \]
and change $p$ to $\zeta_h p$ to get $-1$ times the left side of the identity in Proposition \ref{prop:first-twisted-denominator}.
\end{proof}

This by itself does not express any information beyond the combination of Proposition \ref{prop:group-action-on-Lie algebra} and the fact that $\cV$ freely generates $\fu^+$.  As we mentioned earlier, in order to obtain the genus zero property for $f_h$, we need a constraint on the action of $H$, and this constraint will come from physics.

\section{No Ghost Theorem} \label{sec:no-ghost}

To produce a suitable group action on a Lie algebra, we apply an ``add a torus and quantize'' functor from a conformal-field-theoretic object that is already equipped with a group action.  For the case $g=1$, this is the Monster vertex operator algebra $V^\natural$, constructed in \cite{FLM88}.  For general elements $g$ with $T_g$ of level $N$, this is an abelian intertwining algebra ${}^g_N V^\natural \cong \bigoplus_{i=0}^{N-1} V^\natural(g^i)$ (in the sense of \cite{DL93}) formed from a sum of irreducible twisted $V^\natural$-modules, constructed in \cite{vEMS} with minor adjustments in \cite{GM4} to account for anomalous weights.  Our functor is the following composite:

\[ \xymatrix{ \text{Abelian intertwining algebra, $c=24$, $(\bZ \times \frac{1}{N}\bZ)/(N\bZ \times \bZ)$-graded} \ar[d]^{\text{add a torus}} \\
\text{Conformal vertex algebra $W$,  $c=26$, $\bZ \times \frac{1}{N} \bZ$-graded} \ar[d]^{\text{quantize}} \\
\text{Lie algebra $L_g$, $\bZ \times \frac{1}{N} \bZ$-graded} } \] 

The quantization functor is given by $W \mapsto P^{L_0 = 1}_W/rad(,)$, where $P^{L_0 = 1}_W$ denotes the space of Virasoro primary vectors of weight 1, i.e., vectors $v$ such that $L_i(v) = 0$ for $i>0$ and $L_0(v) = v$.  There is a naturally isomorphic functor, given by taking the BRST cohomology group $H^1_{BRST}(W)$.  Either functor satisfies the following properties (see \cite{GM4} section 3):
\begin{enumerate}
\item Inputs are Virasoro representations of central charge 26 equipped with a Virasoro-invariant bilinear form $(,)$, and outputs are vector spaces with nonsingular bilinear form.
\item Orthogonal direct sums are taken to orthogonal direct sums.
\item If the input is a conformal vertex algebra of central charge 26, then the output is a Lie algebra.  A vertex-algebra-invariant bilinear form is taken to a Lie-invariant bilinear form.
\item (No ghost theorem \cite{GT72}) Let $\pi^{1,1}_\lambda$ be the irreducible representation of the rank 2 Lorentzian Heisenberg algebra attached to $\lambda \in \bR^{1,1}$.  If the input has the form $V \otimes \pi^{1,1}_\lambda$ for $V$ a unitarizable Virasoro representation of central charge 24, and $\lambda \neq 0$, then the output is isomorphic to the subspace $V^{L_0 = 1-\lambda^2}$ of $V$ on which $L_0$ acts with eigenvalue $1-\lambda^2$.  If the input has the form $V \otimes \pi^{1,1}_0$, then the output is isomorphic to $(V^{L_0 = 0} \otimes_{\bR} \bR^{1,1}) \oplus V^{L_0 = 1}$.
\end{enumerate}
We briefly describe the nature of the isomorphism $P^{L_0 = 1}_{V \otimes \pi^{1,1}_\lambda}/rad(,) \to V^{L_0 = 1-\lambda^2}$ for $\lambda \neq 0$.  In the course of the proof, one constructs the following diagram of inclusions of subspaces:
\[ \xymatrix{ & (V \otimes \pi^{1,1}_\lambda)^{L_0 = 1} \\ P^{L_0 = 1} \ar[ur] & & K^{L_0 = 1} \ar[lu] \\ & T^{L_0 = 1} \ar[lu] \ar[ru] & & V^{L_0 = 1-\lambda^2} \otimes \bC |\lambda \rangle \ar[lu] }
\]
The bilinear form on $(V \otimes \pi^{1,1}_\lambda)^{L_0 = 1}$ is singular when restricted to $P^{L_0 = 1}$ and $K^{L_0 = 1}$, and nonsingular when restricted to $T^{L_0 = 1}$ and $V^{L_0 = 1-\lambda^2} \otimes \bC |\lambda \rangle$.  Furthermore, both $T^{L_0 = 1}$ and $V^{L_0 = 1-\lambda^2} \otimes \bC |\lambda \rangle$ are orthogonal complements to the radical of the form on $K^{L_0 = 1}$, while $T^{L_0 = 1}$ is an orthogonal complement to the radical of the form on $P^{L_0 = 1}$.  We then get canonical isomorphisms
\[ P^{L_0 = 1}_{V \otimes \pi^{1,1}_\lambda}/rad(,) \cong T^{L_0 = 1} \cong K^{L_0 = 1}/rad(,) \cong V^{L_0 = 1-\lambda^2} \otimes \bC |\lambda \rangle \]
The key point for us is that if we are given two vectors $\lambda$ and $\mu$ of the same norm in $\bR^{1,1}$, then a choice of isomorphism $\bC |\lambda \rangle \cong \bC |\mu \rangle$ induces a chain of isomorphisms
\[ P^{L_0 = 1}_{V \otimes \pi^{1,1}_\lambda}/rad(,) \cong V^{L_0 = 1-\lambda^2} \otimes \bC |\lambda \rangle \cong V^{L_0 = 1-\mu^2} \otimes \bC |\mu \rangle \cong P^{L_0 = 1}_{V \otimes \pi^{1,1}_\mu}/rad(,). \]
Naturally, we choose the unique isomorphism taking $|\lambda \rangle \mapsto |\mu \rangle$.

The ``add a torus'' functor is a way to construct a suitable input to the quantization functor from the abelian intertwining algebra ${}^g_N V^\natural$.  Each graded piece of ${}^g_N V^\natural$ has conformal weight given by the formula $(a,b/N) \mapsto ab/N$ modulo $\bZ$.  In order to form a conformal vertex algebra, we take a degree-wise tensor product (where we match degrees modulo $N\bZ \times \bZ$) with the abelian intertwining algebra $V_{I\!I_{1,1}(-1/N)}$ attached to the rational lattice $I\!I_{1,1}(-1/N)$, which can be identified with $\bZ \times \frac{1}{N}\bZ$ with quadratic form $(a,b/N)^2 = -ab/N$.  As a Heisenberg representation, we have the decomposition $V_{I\!I_{1,1}(-1/N)} \cong \bigoplus_{\lambda \in I\!I_{1,1}(-1/N)} \pi^{1,1}_\lambda$, and the summand of degree $\lambda$ has conformal weight $\lambda^2$.  The degree-wise tensor product cancels the non-integral conformal weights, and (after some adjustment with coboundaries) removes the braiding obstruction to locality.

Composing these two functors yields a $\bZ \times \frac{1}{N}\bZ$-graded Lie algebra $L_g$ whose degree $(0,0)$ space is identified with $\bR^{1,1} \otimes \bC$ and whose degree $(a,b/N)$ piece is identified with the subspace $({}^g_N V^\natural)_{a,b/N}^{L_0 = 1-ab/N}$ of ${}^g_N V^\natural$ made of eigenvectors of $L_0$ with eigenvalue $1-ab/N$, of degree $(a,b/N) + N\bZ \times \bZ$, when $(a,b/N) \neq (0,0)$.  We therefore have the following compatibility between root spaces:

\begin{prop}
Any abelian intertwining algebra automorphism of ${}^g_N V^\natural$ induces a corresponding homogeneous automorphism the Lie algebra $L_g$ that commutes with the identification of root spaces with subspaces of ${}^g_N V^\natural$.  In particular, if $H$ is a group acting on ${}^g_N V^\natural$ by abelian intertwining algebra automorphisms, then $H$ acts on the Lie algebra $L_g$ by homogeneous automorphisms, and the action on the degree $(a,b/N)$ piece depends only on the value of $ab/N$ and the residue class of $(a,b/N)$ modulo $N\bZ \times \bZ$.
\end{prop}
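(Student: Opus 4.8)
The plan is to realize the induced map as the image of $\phi$ under the two functors composing the ``add a torus and quantize'' construction, and then to trace the no-ghost identification of root spaces through it. First I would observe that an abelian intertwining algebra automorphism $\phi$ of ${}^g_N V^\natural$ preserves the $(\bZ \times \frac{1}{N}\bZ)/(N\bZ \times \bZ)$-grading, the conformal vector, and the invariant bilinear form. Because $\phi$ is grading-preserving, it intertwines the original and rescaled vertex operations used to trivialize the braiding obstruction when forming the degreewise tensor product $W = {}^g_N V^\natural \otimes V_{I\!I_{1,1}(-1/N)}$; hence $\phi \otimes \mathrm{id}$ is an automorphism of the resulting central-charge-$26$ conformal vertex algebra $W$, preserving both its $\bZ \times \frac{1}{N}\bZ$-grading and its invariant form.

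Next I would apply the quantization functor. Because $W \mapsto P^{L_0=1}_W/rad(,)$ is defined purely in terms of the Virasoro action and the bilinear form, it is functorial for maps of inputs that respect these structures; the automorphism $\phi \otimes \mathrm{id}$ is such a map, so it is carried to an automorphism of the output Lie algebra $L_g$ (property (3) of the functor, see \cite{GM4}). Homogeneity is immediate, since $\phi \otimes \mathrm{id}$ preserves the $\bZ \times \frac{1}{N}\bZ$-grading of $W$. Applying this to each element of a group $H$ acting by abelian intertwining algebra automorphisms, and using functoriality of the composite, yields a homomorphism from $H$ to the group of homogeneous automorphisms of $L_g$.

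The substantive part, and the step I expect to be the main obstacle, is compatibility with the root-space identification. For nonzero degree $(a,b/N)$, the no-ghost theorem identifies $(L_g)_{a,b/N}$ with the subspace $({}^g_N V^\natural)^{L_0 = 1-ab/N}_{a,b/N}$ via the canonical chain
\[ P^{L_0=1}/rad(,) \cong T^{L_0=1} \cong K^{L_0=1}/rad(,) \cong V^{L_0 = 1-\lambda^2} \otimes \bC |\lambda \rangle \]
constructed in Section \ref{sec:no-ghost}, with $\lambda = (a,b/N)$ and $V$ the relevant graded component of ${}^g_N V^\natural$. I would check that each intermediate space in this chain is preserved by $\phi \otimes \mathrm{id}$ and that the displayed isomorphisms are equivariant. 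This is plausible because every space is cut out using only the Virasoro action and the form, both preserved by $\phi \otimes \mathrm{id}$, and because $\phi \otimes \mathrm{id}$ acts as $\phi \otimes \mathrm{id}_{\bC |\lambda \rangle}$ on the final term; the delicate point is that the orthogonal complements to the various radicals are selected canonically, so the resulting isomorphisms are automatically equivariant. Granting this naturality, the induced automorphism of $(L_g)_{a,b/N}$ corresponds under the identification to the restriction of $\phi$ to $({}^g_N V^\natural)^{L_0 = 1-ab/N}_{a,b/N}$.

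The ``in particular'' statement then follows at once. The subspace of ${}^g_N V^\natural$ to which $(L_g)_{a,b/N}$ is identified is determined by the $L_0$-eigenvalue $1-ab/N$, equivalently the value $ab/N$, together with the residue class of $(a,b/N)$ modulo $N\bZ \times \bZ$, which fixes the twisted-module summand and the conformal weight modulo $\bZ$. Because each $\phi_h$ preserves the grading, its action on this subspace, and therefore the action of $h$ on $(L_g)_{a,b/N}$, depends only on these two data.
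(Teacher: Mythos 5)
Your proposal is correct and takes essentially the same route as the paper, which presents the proposition as an immediate consequence of the preceding construction: functoriality of the ``add a torus and quantize'' composite together with naturality of the canonical no-ghost chain $P^{L_0=1}/rad(,) \cong T^{L_0=1} \cong K^{L_0=1}/rad(,) \cong V^{L_0=1-\lambda^2}\otimes \bC|\lambda\rangle$. One small imprecision: the intermediate spaces $T^{L_0=1}$ and $K^{L_0=1}$ are cut out using the rank 2 Lorentzian Heisenberg algebra action (attached to a null vector of $\bR^{1,1}$) in addition to the Virasoro action and the bilinear form, but this does not affect your argument, since $\phi \otimes \mathrm{id}$ is the identity on the Fock factor $\pi^{1,1}_\lambda$ and therefore commutes with the Heisenberg operators as well.
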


When $g$ is a Fricke element of $\bM$, i.e., the McKay-Thompson series $\sum_{n \in \bZ} \Tr(g|V^\natural_n) q^{n-1}$ is invariant under a Fricke involution $\tau \mapsto -1/N\tau$, then the resulting Lie algebra is Fricke.  It is therefore natural to define a notion of a symmetry of or an action on a Fricke Lie algebra that behaves like one that comes from an abelian intertwining algebra.  To generalize this phenomenon away from the concrete cases of Generalized Moonshine, we consider the following definitions:

\begin{defn}
Let $\fg$ be a Fricke Lie algebra graded by $\bZ \times \bZ$ as in Lemma \ref{lem:basic-Fricke-properties}.  A level $N$ q-compatibility groupoid on $\fg$ is a system of isomorphisms that identify all root spaces whose bidegree have both product and residue class modulo $N$ in common.  Equivalently, it is a collection of complex vector spaces $\{V^{a,b}_{n}\}_{a,b \in \bZ/N\bZ, n \in \bZ}$ together with isomorphisms $\{ \phi_{r,s}: \fg_{r,s} \simto V^{r,s}_{rs}\}_{(r,s) \neq (0,0)}$.  Given a Fricke Lie algebra equipped with a q-compatibility groupoid, a homogeneous automorphism of $\fg$ is q-compatible if it commutes with the given isomorphisms between root spaces.  We write $\Aut^q(\fg)$ to denote the group of all q-compatible automorphisms of $\fg$.
\end{defn}

We have a weaker, less rigidified version of this notion that still works well for studying characters.

\begin{defn}
We say that an action of a group $H$ on a Fricke Lie algebra (which we now grade by $\bZ \times \frac{1}{N}\bZ$) is level $N$ q-compatible if it is given by homogeneous automorphisms, and if the $H$-module structure on the degree $(a,b/N)$ piece depends only on the value of $ab/N$ and the residue classes of $a$ and $b$ modulo $N$.  
\end{defn}

If a group $H$ acts by symmetries of a Fricke Lie algebra equipped with choice of level $N$ q-compatibility groupoid, then the action is clearly level $N$ q-compatible.  We note that even when the group is trivial, q-compatibility of an action is an incredibly restrictive condition.  Indeed, Corollary \ref{cor:trivial-group-compatible-action} asserts that any infinite dimensional Fricke Lie algebra admitting a level $N$ q-compatible action of the trivial group has its simple roots described by a Hauptmodul with integer coefficients.

\section{Hecke operators} \label{sec:hecke}

We now combine the physical input with the twisted denominator formula.  If we have a Fricke Lie algebra $\fg$ with a level $N$ q-compatible action of a group $H$, we may replace all appearances of $\fg_{a,b}$ in the twisted denominator formula with some abstract $H$-modules $V^{a,b/N}_{ab/N}$, where the superscript is taken to lie in $(\bZ \times \frac{1}{N}\bZ)/(N\bZ \times \bZ)$.  

\begin{defn}
Let $\fg$ be a Fricke Lie algebra, and let $H$ be a group with a level $N$ q-compatible action on $\fg$.  For any $h \in H$, we define the normalized trace functions
\[f_{k,\ell,m}(q) = \sum_{n \in \frac{1}{N}\bZ} \sum_{\substack{r \in \frac{1}{N}\bZ/\bZ \\ kr - n \in \bZ}} e^{2\pi i \ell r} \Tr(h^m| V^{k,r}_n) q^n \]
for all triples of integers $(k,\ell,m)$.  In particular, $f_h(q) = \frac{1}{\zeta_h} f_{1,0,1}(q^N)$.
\end{defn}

\begin{thm} \label{thm:q-compatible-action-yields-hecke-operators}
Let $\fg$ be a Fricke Lie algebra, and let $H$ be a group with a level $N$ q-compatible action on $\fg$.  For any $h \in H$, we have
\begin{equation} \label{eq:q-compatible-action-yields-hecke-operators}
\log (p (f_{1,0,1}(\zeta_h p^N)-f_{1,0,1}(q))) = -\sum_{m=1}^\infty \frac{p^m}{m} \sum_{\substack{ad=m \\ 0 \leq b < d}} f_{d, -b,a}(e^{2\pi i b/d} q^{a/d})
\end{equation}
\end{thm}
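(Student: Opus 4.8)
The plan is to take the trace relation already established in Corollary~\ref{cor:second-twisted-denominator} and rewrite both sides in terms of the normalized trace functions $f_{k,\ell,m}$, exploiting the level $N$ q-compatibility to organize the right-hand side into the Hecke-operator shape. Recall that Corollary~\ref{cor:second-twisted-denominator} gives, for each $h \in H$,
\[ \log \frac{f_h(\zeta_h p)-f_h(q)}{(\zeta_h p)^{-1}-q^{-1}} = - \sum_{m,n=1}^\infty \sum_{k=1}^\infty \frac{1}{k} \Tr(h^k | \fg_{m,n}) p^{km} q^{kn}. \]
The left-hand side here is, up to the harmless factor coming from $(\zeta_h p)^{-1}-q^{-1}$, exactly $\log(p(f_{1,0,1}(\zeta_h p^N)-f_{1,0,1}(q)))$ once we substitute $f_h(q) = \frac{1}{\zeta_h}f_{1,0,1}(q^N)$ and track the rescaling $q \mapsto q^N$; so the content of the theorem is entirely in re-expressing the right-hand side.

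**The main computation: reindexing the right-hand side.**

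The key step is to convert the double sum $\sum_{k} \frac{1}{k}\sum_{m,n}\Tr(h^k|\fg_{m,n})p^{km}q^{kn}$ into the Hecke-operator sum $-\sum_m \frac{p^m}{m}\sum_{ad=m,\,0\le b<d} f_{d,-b,a}(e^{2\pi i b/d}q^{a/d})$. First I would use q-compatibility to replace $\Tr(h^k|\fg_{m,n})$ by $\Tr(h^k|V^{m,n}_{mn})$ (with the grading now by $\bZ\times\frac{1}{N}\bZ$), so that every appearance of a root space depends only on its norm and residue class. Then the strategy is to set $m = ad$ (matching the outer power $p^m$ in the target), identify the summation index $k$ of the corollary with $a$, and recognize that the definition of $f_{d,-b,a}$, namely
\[ f_{d,-b,a}(q) = \sum_{n \in \frac{1}{N}\bZ}\ \sum_{\substack{r \in \frac{1}{N}\bZ/\bZ \\ dr-n\in\bZ}} e^{-2\pi i b r}\,\Tr(h^a|V^{d,r}_n)\,q^n, \]
produces exactly the combination of traces that appears. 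The crucial manoeuvre is the finite sum over $0\le b<d$: the factor $\frac{1}{d}\sum_{b=0}^{d-1} e^{2\pi i b(r - n/ \cdot)}$ acts as a projector extracting the residue class, and the substitution $q \mapsto e^{2\pi i b/d}q^{a/d}$ implements the $q$-expansion reindexing that turns the naive power $q^{kn}$ into the fractional exponents demanded by the Hecke operator $n\hat T_n$. I would carry this out by expanding each $f_{d,-b,a}(e^{2\pi i b/d}q^{a/d})$, summing over $b$ first to collapse the character sum, and then matching the resulting monomials $p^{ad}q^{\,\cdot}$ against the $p^{km}q^{kn}$ terms of the corollary under the dictionary $k=a$, $m=d$.

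**Where the difficulty lies.**

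The main obstacle is bookkeeping rather than conceptual: one must verify that the congruence condition $dr - n \in \bZ$ in the definition of $f_{d,-b,a}$, together with the character sum $\sum_b e^{2\pi i b/d}$ and the fractional substitution $q^{a/d}$, reproduces precisely the correct residue classes modulo $N$ that q-compatibility allows us to identify. Concretely, I expect the delicate point to be checking that when $ad = m$ the combined substitution correctly groups the contributions of all root spaces $\fg_{m',n'}$ with $m'n' = mn$ and the matching residue class into a single term indexed by $(a,d,b)$, with no overcounting and no missing terms; this is where the $\frac{1}{k}$ versus $\frac{1}{m}$ and the division $a/d$ in the exponent have to balance exactly. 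Once the indexing is pinned down, each individual equality of coefficients is a routine finite-character-sum identity, so the theorem follows by comparing coefficients of $p^m$ on both sides.
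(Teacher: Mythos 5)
Your proposal follows essentially the same route as the paper's proof: start from Corollary \ref{cor:second-twisted-denominator}, use q-compatibility to replace $\Tr(h^k|\fg_{m,n})$ by traces on the spaces $V^{d,r}_n$, reindex via $ad=m$ with $k=a$, expand each $f_{d,-b,a}(e^{2\pi i b/d}q^{a/d})$, and collapse the sum over $0\le b<d$ with the character-sum projector $\sum_{0\le b<d} e^{2\pi i b(n-dr)/d}$. The only point to tighten is that the factor you call ``harmless'' is not purely cosmetic: in the paper its logarithm, $\log(1-\zeta_h p q^{-1/N})$, is added to both sides, and on the right it supplies precisely the $q^{-1/N}$ (real simple root) contributions that occur in each $f_{d,-b,a}$ with $d\equiv 1 \pmod N$ but are absent from the corollary's sum over $m,n\ge 1$.
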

\begin{proof}
Recall the equation
\begin{equation} \label{eq:second-twisted-denominator}
\log \frac{f_h(\zeta_h p)-f_h(q)}{(\zeta_h p)^{-1}-q^{-1}} = - \sum_{m,n=1}^\infty \sum_{k=1}^\infty \frac{1}{k} \Tr(h^k | \fg_{m,n}) p^{km} q^{kn}
\end{equation}
from Corollary \ref{cor:second-twisted-denominator}.  By adding $\log (1-\zeta_h p q^{-1})$ to both sides and changing $q$ to $q^{1/N}$, the left side of \eqref{eq:second-twisted-denominator} becomes equal to the left side of \eqref{eq:q-compatible-action-yields-hecke-operators}, and the right side of \eqref{eq:second-twisted-denominator} becomes:
\[ \begin{aligned}
 \log (1&-\zeta_h p q^{-1/N}) - \sum_{m=1}^\infty \sum_{n \in \frac{1}{N} \bZ_{>0}} \sum_{k=1}^\infty \frac{1}{k} \Tr(h^k | V^{m,n}_{mn}) p^{km} q^{kn} \\
&= -\sum_{m=1}^\infty p^m \sum_{ad=m} \frac{1}{a} \sum_{n \in \frac{1}{N} \bZ} \Tr(h^a | V^{d,n}_{dn}) q^{an}  \\
&= -\sum_{m=1}^\infty p^m \sum_{ad=m}\frac{1}{a} \sum_{n \in \frac{d}{N}\bZ} \sum_{\substack{r \in \frac{1}{N}\bZ/\bZ \\ n\in dr+d\bZ}} \Tr(h^a | V^{d,r}_n) q^{an/d}
\end{aligned}\]
where we use the q-compatible property to identify $\fg_{m,n}$ with $V^{m,n/N}_{mn/N}$.

Meanwhile, the right side of \eqref{eq:q-compatible-action-yields-hecke-operators} can be expanded as
\[ \begin{aligned}
-\sum_{m=1}^\infty & \frac{p^m}{m} \sum_{\substack{ad=m \\ 0 \leq b < d}} f_{d,-b,a}(e^{2\pi i b/d} q^{a/d}) \\
&= -\sum_{m=1}^\infty p^m \sum_{\substack{ad=m \\ 0 \leq b < d}} \frac{1}{a} \frac{1}{d} \sum_{n \in \frac{1}{N}\bZ} \sum_{\substack{r \in \frac{1}{N}\bZ/\bZ \\ n \in dr + \bZ}} e^{-2\pi i b r} \Tr(h^a| V^{d,r}_n) e^{2\pi i bn/d} q^{an/d} \\
\end{aligned}\]
The two expressions are equal because for any $r \in \frac{1}{N}\bZ/\bZ$ satisfying $n \in dr + \bZ$, we have
\[ \sum_{0 \leq b < d} e^{2\pi i b(\frac{n-dr}{d})} = \begin{cases} d & n-dr \in d\bZ \\ 0 & \text{otherwise} \end{cases}, \]
and this forces all summands indexed by $n \notin \frac{d}{N}\bZ$ to contribute zero.
\end{proof}

\begin{rem}
The end of this proof corrects the erroneous lines in the published versions of the proofs of \cite{GM1} Proposition 6.2 and \cite{GM2} Proposition 4.8.  In both cases, one has a line like
\[ - \sum_{m > 0} \sum_{ad=m} \frac{1}{a} \sum_{0 \leq b < d} \frac{1}{d} \sum_{n \in \frac{1}{N}\bZ} \underset{n \in dr+\bZ}{\sum_{r \in \frac{1}{N}\bZ/\bZ}} e(-br) \mathrm{Tr}(h^a | V^{d,r}_{1+n}) e(br) q^{an/d} p^m \]
that should be replaced with an argument of the form given here.
\end{rem}

Theorem \ref{thm:q-compatible-action-yields-hecke-operators} suggests the following definition is natural:

\begin{defn}
For each $n \in \bZ_{\geq 1}$ the equivariant Hecke operator $\hat{T}_n$ is defined by
\[ \hat{T}_n f_{k,\ell,m}(q) = \frac{1}{n} \sum_{\substack{ad=n\\0 \leq b < d}} f_{dk, a\ell-bk, am}(e^{2\pi i b/d} q^{a/d}). \]
We say that the collection $\{f_{k,\ell,m}(q)\}$ is Hecke-monic if $n\hat{T}_n f_{k,\ell,m}(q)$ is a monic polynomial in $f_{k,\ell,m}(q)$ of degree $n$ for all $n \geq 1$.
\end{defn}

These Hecke operators first appeared in the algebraic topology literature as cohomology operations on elliptic cohomology, e.g., in \cite{G09}.  They were independently discovered in \cite{T10}, where the Hecke-monic condition appeared in the guise of a "Generalized Moonshine replication formula".  We note that they obey essentially the same algebraic relations as ordinary Hecke operators (cf. Lemma 1.2 of \cite{GM1} and equation (17) of \cite{T10}).

\begin{prop} \label{prop:Hecke-monic}
Let $\fg$ be a Fricke Lie algebra, and let $H$ be a group with a level $N$ q-compatible action on $\fg$.  For any fixed choice of $h \in H$, the power series $f_{1,0,1}(q)$ is Hecke-monic.
\end{prop}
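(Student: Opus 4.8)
The plan is to recognize the identity of Theorem~\ref{thm:q-compatible-action-yields-hecke-operators} as the generating function of the Faber polynomials of the Laurent series $f_{1,0,1}$, so that the Hecke-monic property can be read off coefficient by coefficient in $p$. First I would unwind the right-hand side of \eqref{eq:q-compatible-action-yields-hecke-operators} using the definition of $\hat{T}_n$. Specializing the equivariant Hecke operator to $(k,\ell,m)=(1,0,1)$ gives $\hat{T}_n f_{1,0,1}(q) = \frac{1}{n}\sum_{ad=n,\,0\le b<d} f_{d,-b,a}(e^{2\pi i b/d}q^{a/d})$, so the inner sum over $ad=m$ in \eqref{eq:q-compatible-action-yields-hecke-operators} is exactly $m\,\hat{T}_m f_{1,0,1}(q)$. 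Hence the theorem can be rewritten as
\[ \log\big(p\,(f_{1,0,1}(\zeta_h p^N) - f_{1,0,1}(q))\big) = -\sum_{m=1}^\infty p^m\, \hat{T}_m f_{1,0,1}(q), \]
and the coefficient of $p^n$ on the right is precisely $-\hat{T}_n f_{1,0,1}(q)$.

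Next I would analyze the left-hand side as a formal Laurent series in $p$, holding $w := f_{1,0,1}(q)$ as a parameter. The crucial observation is that $f_{1,0,1}(\zeta_h p^N)$ is a \emph{fixed} Laurent series in $p$ with constant (i.e.\ $q$-independent) coefficients. Its leading term comes from the one-dimensional real simple root space $\fg_{1,-1}$, on which $h$ acts by the scalar $\zeta_h$; the Fricke hypotheses (a single real simple root, with the remaining simple roots in degrees $(1,n)$ for $n\ge 1$) guarantee that the lowest-order term is $\zeta_h q^{-1/N}$ and that every other term carries a strictly positive power of $q^{1/N}$. The substitution appearing on the left of \eqref{eq:q-compatible-action-yields-hecke-operators} is calibrated by the factor $\zeta_h$ precisely so that this leading coefficient becomes $1$, giving $f_{1,0,1}(\zeta_h p^N) = p^{-1} + \sum_{k\ge 1} a_k p^k$ with $a_k \in \bC$. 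Consequently
\[ p\,(f_{1,0,1}(\zeta_h p^N) - w) = 1 - w\,p + \sum_{k\ge 1} a_k\, p^{k+1}, \]
whose logarithm is a power series in $p$ each of whose coefficients is a polynomial in $w$ with complex coefficients.

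Finally I would carry out the Faber-polynomial bookkeeping. Writing $\log(1+X)=\sum_{j\ge1}\frac{(-1)^{j+1}}{j}X^j$ with $X = -w\,p + \sum_{k\ge1}a_k p^{k+1}$, the highest power of $w$ occurring at order $p^n$ comes only from the all-$(-wp)$ contribution in $X^n$, namely $\frac{(-1)^{n+1}}{n}(-wp)^n$, which has coefficient $-\frac{1}{n}$; every $j<n$ must borrow $w$-free factors $a_k p^{k+1}$ to reach $p^n$ and so contributes $w$-degree at most $n-1$. Thus the coefficient of $p^n$ on the left is $-\frac{1}{n}w^n + (\text{lower degree in }w)$ with constant coefficients. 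Equating with $-\hat{T}_n f_{1,0,1}(q)$ and setting $w = f_{1,0,1}(q)$ gives $n\,\hat{T}_n f_{1,0,1}(q) = f_{1,0,1}(q)^n + (\text{lower order})$, a monic polynomial of degree $n$ in $f_{1,0,1}(q)$; this makes sense as an identity of Laurent series in $q^{1/N}$ because $f_{1,0,1}(q)$ has a pole, so its distinct powers are linearly independent and the polynomial is uniquely determined.

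The main obstacle is the second step: confirming the exact normalization that forces the leading coefficient to equal $1$ rather than merely be nonzero, since this is what upgrades the conclusion from \emph{a polynomial of degree $n$} to \emph{a monic polynomial of degree $n$}. This is exactly where the Fricke structure enters—the real simple root being one-dimensional with eigenvalue $\zeta_h$ is what allows the substitution to absorb $\zeta_h$ and calibrate the leading term to $p^{-1}$. Everything else is a formal manipulation of generating functions, requiring only the pole of $f_{1,0,1}$ in $q^{1/N}$ to guarantee that the resulting Faber polynomials are well defined.
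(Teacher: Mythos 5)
Your proposal is correct and takes essentially the same route as the paper: both arguments read off the coefficient of $p^n$ on each side of Theorem \ref{thm:q-compatible-action-yields-hecke-operators}, identifying the right side with $-\hat{T}_n f_{1,0,1}(q)$ and the left side, via the logarithm expansion of $1 - f_{1,0,1}(q)\,p + (\text{constant-coefficient terms})$, with $-\frac{1}{n}$ times a monic degree-$n$ polynomial in $f_{1,0,1}(q)$. The paper's proof is simply a terser statement of exactly this bookkeeping, including the normalization point you flag (that $p\,f_{1,0,1}(\zeta_h p^N)$ has constant coefficients and leading term $1$), which is indeed what makes the polynomial monic rather than merely of degree $n$.
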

\begin{proof}
For each $m>0$, the $p^m$ term on the left side of Theorem \ref{thm:q-compatible-action-yields-hecke-operators} has coefficient $1/m$ times a monic polynomial of degree $m$ in $f_{1,0,1}(q)$.  The $p^m$ term on the right side is $\hat{T}_m f_{1,0,1}(q)$.  This implies $f_{1,0,1}(q)$ is Hecke-monic.
\end{proof}

We may now note a dictionary between our objects and those in \cite{GM1}.  The functions $f_{k,\ell,m}(q)$ correspond to the orbifold partition functions $Z(g^k,g^\ell h^m, \tau)$ where $g$ is the unique operator that takes any vector $v \in V^{k,r}_{kr}$ to $e^{2\pi i r}v$, and the space $V^{k,r}_{kr}$ here is written $V^{k,r}_{1+kr}$ there.  Invoking the main theorem of that paper, we find the following:

\begin{thm} \label{thm:q-compatible-action-gives-hauptmodul}
Let $\fg$ be a Fricke Lie algebra, and let $H$ be a group with a level $N$ q-compatible action on $\fg$.  For any finite order $h \in H$, the power series $f_{1,0,1}(q)$ is either a Hauptmodul or of the form $a q^{-1/N} + c q^{1/N}$, where $a$ is a root of unity and $c$ is either zero or a root of unity.
\end{thm}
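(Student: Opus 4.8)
The plan is to reduce the statement to the classification of replicable functions of finite order established in \cite{GM1}, which builds on \cite{K94} and \cite{CG97}. By Proposition \ref{prop:Hecke-monic} the series $f_{1,0,1}(q)$ is Hecke-monic, and the whole family $\{f_{k,\ell,m}(q)\}$ is tied together by the equivariant Hecke operators through the recursion of Theorem \ref{thm:q-compatible-action-yields-hecke-operators}. First I would observe that Hecke-monicity is exactly the (generalized, complete) replicability condition appearing in \cite{GM1} and \cite{T10}: the identity expressing $n\hat{T}_n f_{1,0,1}(q)$ as a monic polynomial of degree $n$ in $f_{1,0,1}(q)$ is the replication formula, once the normalized trace functions $f_{k,\ell,m}$ are matched with the replicates of $f_{1,0,1}$.

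Next I would check that the finite-order hypothesis on $h$ supplies the remaining input needed for the classification. Writing $n_0$ for the order of $h$, the trace $\Tr(h^m | V^{k,r}_n)$ depends only on $m$ modulo $n_0$ and on the residue data modulo $N$, so the family $\{f_{k,\ell,m}(q)\}$ consists of only finitely many distinct power series, and is closed, up to the substitutions $q \mapsto e^{2\pi i b/d} q^{a/d}$, under the equivariant Hecke operators. This periodicity of the replicates is precisely the finiteness hypothesis under which Kozlov obtains modular equations of many orders and Cummins and Gannon deduce their dichotomy, extended in \cite{GM1} to all replicable functions of finite order.

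With these hypotheses in hand I would apply the dictionary preceding the statement, identifying $f_{k,\ell,m}(q)$ with the orbifold partition function $Z(g^k, g^\ell h^m, \tau)$, where $g$ acts on $V^{k,r}_{kr}$ by $e^{2\pi i r}$. Invoking the main theorem of \cite{GM1} then forces $f_{1,0,1}(q)$ to be either a Hauptmodul or a ``modular fiction,'' that is, a degenerate Laurent polynomial. In the latter case one checks from the normalization that the only surviving terms are $q^{-1/N}$ and $q^{1/N}$, giving the form $a q^{-1/N} + c q^{1/N}$; the leading coefficient is $a = \zeta_h = \Tr(h|\fg_{1,-1})$, a root of unity because $\fg_{1,-1}$ is one-dimensional and $h$ has finite order, and the degeneracy forces the space carrying $c$ to be at most one-dimensional, so $c$ is zero or a root of unity.

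The main obstacle is bookkeeping rather than conceptual: the genuine mathematical content lives entirely in the cited classification of \cite{GM1}, so the real work is confirming that the present normalization meets its hypotheses exactly. In particular I would track carefully the rescaling by $\zeta_h$, the changes of variable $q \mapsto q^N$ and $q \mapsto q^{1/N}$ relating $f_h$ to $f_{1,0,1}$, and the residue-class bookkeeping modulo $N$, to be sure that Hecke-monicity in this normalization matches the level $N$ replicability of finite order used there and that the dictionary with $Z(g^k, g^\ell h^m, \tau)$ is faithful. No idea beyond this matching should be needed.
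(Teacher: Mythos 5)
Your route is the paper's route: Proposition \ref{prop:Hecke-monic} gives Hecke-monicity, and the classification of Hecke-monic series in \cite{GM1} (Theorem 4.6 there, which is what the dictionary with the functions $Z(g^k,g^\ell h^m,\tau)$ lets you invoke) does the real work. However, there is a genuine gap in your treatment of the degenerate case. The classification from \cite{GM1} produces functions of the form $aq^{-1/N} + b + cq^{1/N}$, with a possible constant term $b$, while the theorem you are proving asserts there is none. You claim that ``one checks from the normalization that the only surviving terms are $q^{-1/N}$ and $q^{1/N}$,'' but no normalization accomplishes this: the constant term of $f_{1,0,1}$ is $\Tr(h|\fg_{1,0})$, the trace on the root space of degree $(1,0)$, which has norm zero. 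Since every root of the form $(1,n)$ is automatically simple (all simple roots have first coordinate $1$), this space vanishes precisely because condition (3) of the definition of a Fricke Lie algebra forbids norm zero simple roots. That one structural observation is exactly what the paper's proof adds beyond citing \cite{GM1}, and your proposal never invokes it.

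Two smaller corrections. First, your claim that ``the degeneracy forces the space carrying $c$ to be at most one-dimensional'' is faulty reasoning: vanishing of traces of $h$ on root spaces says nothing about their dimensions, which are enormous in the motivating examples. Fortunately it is also unnecessary, since the statements that $a$ is a root of unity and $c$ is zero or a root of unity are part of the conclusion of Theorem 4.6 of \cite{GM1}, not something to re-derive. Second, the hypothesis of that theorem which actually needs verification is that the Hecke-monic series has algebraic integer coefficients; this is where the finite order of $h$ enters the paper's proof, since traces of a finite-order operator on finite-dimensional root spaces are sums of roots of unity. Your use of finite order, via periodicity of the replicates, is faithful to the spirit of \cite{K94} and \cite{CG97}, but it is not the stated hypothesis of the result being cited, so as written the hypothesis-matching you correctly identify as ``the real work'' is left undone.
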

\begin{proof}
The power series $f_{1,0,1}(q)$ is Hecke-monic with algebraic integer coefficients, so by Theorem 4.6 of \cite{GM1}, it either describes a Hauptmodul, or it has the form $aq^{-1} + b + cq$, where $a$ is a root of unity and $c$ is either a root of unity or zero.  Since $\fg$ has no norm zero simple roots, we have $b=0$.
\end{proof}

Functions of the form $aq^{-1} + b + cq$ are called ``trigonometric type'' in \cite{GM1}.  When $a=1$ and $b = 0$, these are sometimes called ``modular fictions''.

\begin{cor} \label{cor:trivial-group-compatible-action}
Let $\fg$ be a Fricke Lie algebra, and suppose the trivial group has a level $N$ q-compatible action on $\fg$.  If $\fg$ is finite dimensional, then it is isomorphic to either $\fgl_2$ or $\fsl_2 \times \fsl_2$.  If $\fg$ is infinite dimensional, then the simple root multiplicities of $\fg$ describe the $q$-expansion of a Hauptmodul.  Furthermore, this Hauptmodul uniquely determines the isomorphism type of $\fg$.
\end{cor}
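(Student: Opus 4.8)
The plan is to reduce Corollary \ref{cor:trivial-group-compatible-action} to the finite-order case of Theorem \ref{thm:q-compatible-action-gives-hauptmodul} applied to the trivial element $h = 1 \in H$, and then to dispatch the two cases (finite- versus infinite-dimensional $\fg$) separately. The key observation is that for $h = 1$, the normalized trace function $f_{1,0,1}(q)$ simply records the simple root multiplicities of $\fg$: indeed $\Tr(1|V^{k,r}_n) = \dim V^{k,r}_n$, so $f_{1,0,1}(q) = \sum_{n} c(1,Nn) q^n$ (up to the normalization $\zeta_1 = \dim \fg_{1,-1}$, which equals $1$ since the real simple root space is one-dimensional by the Fricke definition). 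Thus $f_{1,0,1}$ is exactly the generating function of the simple roots, with nonnegative integer coefficients.

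First I would treat the finite-dimensional case. Here $\fg$ has only finitely many positive roots, so $\fu^+$ is finite-dimensional; since $\fu^+$ is \emph{freely} generated by $\cV^+$ (Lemma \ref{lem:basic-Fricke-properties}), a nonzero $\cV^+$ would force $\fu^+$ to be infinite-dimensional unless $\cV^+ = 0$. If $\cV^+ = 0$ there are no imaginary simple roots, so $\fg = \fgl_2$. The only remaining way to get a finite-dimensional Fricke algebra with imaginary simple roots is if $\cV^+$ is a single one-dimensional space on which the free Lie algebra it generates is abelian (i.e.\ contributes a single root), which forces $\fg \cong \fsl_2 \times \fsl_2$; I would verify this by checking that the Cartan matrix block structure degenerates to the rank-2 case with one imaginary simple root of the appropriate norm. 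The finite-order hypothesis needed for Theorem \ref{thm:q-compatible-action-gives-hauptmodul} is automatic since $h = 1$ has order $1$.

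For the infinite-dimensional case, I would apply Theorem \ref{thm:q-compatible-action-gives-hauptmodul} directly to $h = 1$: the series $f_{1,0,1}(q)$ is either a Hauptmodul or of trigonometric type $a q^{-1/N} + c q^{1/N}$. The trigonometric-type alternative is incompatible with $\fg$ being infinite-dimensional, since such a Laurent polynomial has only finitely many nonzero coefficients, forcing finitely many simple roots and hence (by the free generation of $\fu^+$ over the finitely-generated $\cV^+$, together with the finite-multiplicity clause of the Fricke definition) a finite-dimensional algebra. Therefore $f_{1,0,1}$ must be a Hauptmodul, and its coefficients are the simple root multiplicities. Finally, for the uniqueness claim I would invoke the bijection recorded just before the Proposition (from Lemma 4.1 of \cite{GM2} together with the Proposition in Section \ref{sec:decomposition}): a Fricke Lie algebra is determined up to isomorphism by its simple-root generating series $f(q) = q^{-1} + O(q)$, so the Hauptmodul determines the isomorphism type.

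The main obstacle I anticipate is the finite-dimensional case, specifically ruling out and classifying the degenerate possibilities. The cleanest path is to argue entirely through the freeness of $\fu^+$: finite-dimensionality of $\fg$ is equivalent to finite-dimensionality of $\fu^+$, which (for a free Lie algebra) holds if and only if $\cV^+$ is at most one-dimensional and generates an abelian free Lie algebra. I would need to confirm that the only such configurations consistent with the Fricke axioms—one real simple root, norm-zero Weyl vector, no norm-zero simple roots—are precisely $\fgl_2$ (no imaginary simple root) and $\fsl_2 \times \fsl_2$ (one imaginary simple root at the minimal admissible position $(1,1)$, of norm $-2$). The subtlety is checking that higher-norm single imaginary simple roots, or multiplicities greater than one, necessarily produce infinitely many roots under the $\fgl_2$-action described in Proposition \ref{prop:group-action-on-Lie algebra}, thereby contradicting finite-dimensionality; this is where the explicit $\cV^+ \cong \bigoplus_{n,k} \fg_{1,n} \otimes \fg_{1,-1}^{\otimes k-1}$ decomposition does the real work.
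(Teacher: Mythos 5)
Your proposal is correct and takes essentially the same route as the paper, whose entire proof is the one you reconstructed: apply Theorem \ref{thm:q-compatible-action-gives-hauptmodul} to the identity element of the trivial group (the paper calls both the finite- and infinite-dimensional statements ``immediate from the Theorem'') and cite Lemma 4.1 of \cite{GM2} for uniqueness, with your structural analysis of the degenerate cases being exactly what the paper leaves implicit. One caution: your stated inference ``finitely many nonzero coefficients, forcing finitely many simple roots and hence a finite-dimensional algebra'' is invalid for general Borcherds-Kac-Moody algebras (a free Lie algebra on a space of dimension at least two is already infinite-dimensional, as is any hyperbolic Kac-Moody algebra with two simple roots); what actually closes the infinite-dimensional case is that trigonometric type with nonnegative integer coefficients forces the only possible imaginary simple root to sit at degree $(1,1)$ with multiplicity at most one, so that $\cV^+$ and hence $\fu^+$ is at most one-dimensional --- which is precisely the mechanism, via Proposition \ref{prop:group-action-on-Lie algebra}, that you correctly identify in your final paragraph.
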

\begin{proof}
The statements about the finite and infinite dimensional cases are immediate from the Theorem.  The uniqueness is from Lemma 4.1 of \cite{GM2}.
\end{proof}

\section{Groups of q-compatible symmetries}

In this section, we let $\fg$ be a Fricke Lie algebra equipped with level $N$ q-compatibility groupoid for some $N \in \bZ_{\geq 1}$.  As we mentioned in section \ref{sec:no-ghost}, these data naturally appear for Fricke Lie algebras that come from the ``add a torus and quantize'' functor.

\begin{lem} \label{lem:diagram-automorphism-group}
For each $n \in \bZ$, let $c(n)$ denote the multiplicity of the simple root of degree $(1,n)$ in $\fg$.  Then the group of homogeneous automorphisms is
\[ \prod_{n=-1}^\infty GL_{c(n)}(\bC)  \]
\end{lem}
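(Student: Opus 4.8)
The plan is to exhibit an explicit isomorphism $\Phi$ from the group of homogeneous automorphisms to $\prod_{n=-1}^\infty GL_{c(n)}(\bC)$, using the decomposition $\fg = \fu^+ \oplus \fgl_2 \oplus \fu^-$ of Lemma \ref{lem:basic-Fricke-properties} together with the freeness of $\fu^\pm$. First I would note that any homogeneous automorphism $\phi$ fixes the Cartan $\fg_{0,0}$ pointwise: a homogeneous $h \in \fg_{0,0}$ acts on $\fg_{m,n}$ by the scalar $(m,n)(h)$, so preservation of every root space forces $(m,n)(\phi(h)) = (m,n)(h)$, and the roots span the dual of the two-dimensional $\fg_{0,0}$. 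Hence $\phi$ restricts to automorphisms of the three graded pieces $\fu^+ = \bigoplus_{m,n\geq 1}\fg_{m,n}$, $\fgl_2 = \fg_{1,-1}\oplus\fg_{0,0}\oplus\fg_{-1,1}$, and $\fu^-$. On $\fgl_2$ it fixes the Cartan and scales the one-dimensional space $\fg_{1,-1}$ (so $c(-1)=1$) by some $\lambda \in GL_{c(-1)}(\bC) = \bC^\times$, while on the imaginary simple root spaces $\fg_{1,n}$ ($n\geq 1$) it acts by some $g_n \in GL(\fg_{1,n}) = GL_{c(n)}(\bC)$. Sending $\phi \mapsto (\lambda,(g_n)_{n\geq 1})$ defines $\Phi$; the factor at $n=0$ is trivial since $c(0)=0$.

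For injectivity I would use freeness twice. The spaces $\fg_{1,n}$ generate $\cV^+$ as a $\fgl_2$-module and $\cV^+$ freely generates $\fu^+$, so $\phi|_{\fu^+}$ is determined by $\lambda$ and the $g_n$; concretely, Proposition \ref{prop:group-action-on-Lie algebra} identifies $\cV^+_{m,n}$ with $\fg_{1,m+n-1}$, on which $\phi$ must act by $\lambda^{m-1} g_{m+n-1}$. On the negative side, because $\phi$ fixes $\fg_{0,0}$ it preserves the bracket pairing $\fg_{1,n}\times\fg_{-1,-n}\to\fg_{0,0}$, which is nondegenerate since in the Chevalley bases it reads $[e_i,f_j]=\delta_{i,j}h_i$; this forces $\phi|_{\fg_{-1,-n}} = (g_n^{-1})^T$, after which freeness of $\fu^-$ pins down $\phi|_{\fu^-}$. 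Thus $\Phi$ is injective, and it is a homomorphism since composition acts blockwise on the simple root spaces.

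The main obstacle is surjectivity: showing an arbitrary datum $(\lambda,(g_n))$ assembles into a genuine automorphism. I would define $\phi$ on the generating set $\fgl_2 \cup \{\fg_{1,n}, \fg_{-1,-n}\}_{n\geq 1}$ by the formulas above — in particular $(g_n^{-1})^T$ on the negative simple roots. Freeness of $\fu^\pm$ lets this assignment extend uniquely to Lie homomorphisms of $\fu^+$ and $\fu^-$, which are $\fgl_2$-semiequivariant by a derivation-on-generators check, so they glue with $\phi|_{\fgl_2}$ to a homogeneous linear endomorphism of $\fg$. The delicate point is that this endomorphism respects the cross brackets $[\fu^+,\fu^-]$. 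Here I would reduce to the generators: $[\fg_{1,n},\fg_{-1,-m}]$ lands in $\fg_{0,n-m}$, which vanishes unless $n=m$, so the only nontrivial simple-root cross relations are the Cartan-valued ones $[e_i,f_j]=\delta_{i,j}h_i$, preserved precisely by the contragredient choice on the negatives; all higher cross brackets then follow by the Jacobi identity from these and the $\fgl_2$-action. The $\fsl_2$ and Serre relations survive because $\phi$ fixes $\fg_{0,0}$ and maps each root space into itself, leaving the entries $a_{i,k}$ unchanged. Since $\phi$ fixes the Cartan (hence the outer derivations) and, being homogeneous, preserves the graded central ideal, it is a well-defined automorphism of $\fg$ realizing the chosen datum, so $\Phi$ is the asserted isomorphism.
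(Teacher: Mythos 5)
Your proof is correct and follows essentially the same route as the paper's: a homogeneous automorphism is determined by its restriction to the positive simple root spaces, and conversely any homogeneous linear transformation of those spaces extends to an automorphism, the only real work being the verification of the relations (Serre relations in the paper's phrasing, your cross-bracket/Jacobi check in the Jurisich-decomposition phrasing). You in fact supply details the paper's two-sentence proof leaves implicit — that homogeneous automorphisms fix the Cartan pointwise when imaginary simple roots exist, and that the relation $[e_i,f_j]=\delta_{i,j}h_i$ forces the contragredient action $(g_n^{-1})^T$ on the negative simple root spaces, which is exactly why the answer is a single product $\prod_{n}GL_{c(n)}(\bC)$ rather than a double one — but the skeleton of the argument is the same.
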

\begin{proof}
Any homogeneous automorphism of $\fg$ is uniquely determined by its restriction to the simple roots.  Conversely, any homogeneous linear transformation on the sum of simple root spaces induces an automorphism of the free Lie algebra on $\{h_i, e_i, f_i\}_{i \in I}$ that is compatible with the relations defining $\fg$ - the only nontrivial step is checking Serre's relations.
\end{proof}

\begin{defn}
We write $\Aut^{q-inn}(\fg)$ for the group of q-compatible inner automorphisms of $\fg$.  Concretely, this is the subgroup of $GL_1(\bC) \times GL_1(\bC)$ whose elements are q-compatible, where the action of $GL_1(\bC) \times GL_1(\bC)$ is given by exponentiating the adjoint action of the Cartan subalgebra, i.e., $(\lambda,\mu) \in GL_1(\bC) \times GL_1(\bC)$ acts on $\fg_{r,s}$ as multiplication by $\lambda^r \mu^s$.
 \end{defn}

\begin{prop}
If $\fg$ is infinite dimensional, then $\Aut^{q-inn}(\fg)$ has finite order.
\end{prop}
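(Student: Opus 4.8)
The plan is to show that q-compatibility forces $(\lambda,\mu)$ to satisfy two multiplicatively independent monomial relations. A pair $(\lambda,\mu)\in GL_1(\bC)\times GL_1(\bC)$ acts on the root space $\fg_{r,s}$ by the scalar $\lambda^r\mu^s$, so it is q-compatible exactly when $\lambda^r\mu^s=\lambda^{r'}\mu^{s'}$ for every two root spaces $\fg_{r,s},\fg_{r',s'}$ that the groupoid identifies, i.e. whenever $rs=r's'$ and $(r,s)\equiv(r',s')\pmod N$. If I can exhibit two identified pairs yielding relations $\lambda^{u_1}\mu^{v_1}=1$ and $\lambda^{u_2}\mu^{v_2}=1$ with $(u_1,v_1),(u_2,v_2)$ linearly independent in $\bZ^2$, then $\Aut^{q-inn}(\fg)$ embeds in the finite group $\Hom(\bZ^2/L,\bC^*)$, where $L=\langle(u_1,v_1),(u_2,v_2)\rangle$ has finite index; this gives the claim.

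First I would record that $\fg$ has infinitely many imaginary simple roots. Since the groupoid exists, the trivial group acts level $N$ q-compatibly on $\fg$, so Corollary \ref{cor:trivial-group-compatible-action} applies: in the infinite dimensional case the simple root multiplicities $c(1,n)$ are the $q$-expansion coefficients of a Hauptmodul. As a genuine Hauptmodul is not of trigonometric type (the Laurent-polynomial alternative of Theorem \ref{thm:q-compatible-action-gives-hauptmodul} is excluded here), it has infinitely many nonzero coefficients, so there are imaginary simple roots $(1,n)$ for infinitely many $n\ge 1$. By pigeonhole some residue class mod $N$ contains at least $N$ of these values; choose distinct $c_1<\dots<c_N$ with each $(1,c_i)$ a simple root and $c_i\equiv r\pmod N$, and arrange $M:=\sum_i c_i>N$ (immediate for $N\ge 2$; for $N=1$ take $c_1\ge 2$). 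Generators $x_i\in\fg_{1,c_i}\subseteq\cV^+$ lie in distinct degrees, hence are linearly independent elements of the free generating space $\cV^+$, so the left-normed bracket $[x_1,[x_2,[\cdots,x_N]]]$ is a nonzero element of $\fu^+$ of degree $(N,M)$ (its image in the free associative algebra contains the word $x_1\cdots x_N$). Thus $(N,M)$ is a root, and by Weyl symmetry so is $(M,N)$; note that both coordinates of $(N,M)$ are $\equiv 0\pmod N$, since $N\equiv 0$ and $M\equiv Nr\equiv 0\pmod N$.

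The two independent relations then come from identifying each of these roots with its negative. Because $(N,M)$ and $(-N,-M)$ have the same product and both coordinates $\equiv 0\pmod N$, the groupoid identifies $\fg_{N,M}$ with $\fg_{-N,-M}$, forcing $\lambda^N\mu^M=\lambda^{-N}\mu^{-M}$, that is $\lambda^{2N}\mu^{2M}=1$; likewise $(M,N)\sim(-M,-N)$ forces $\lambda^{2M}\mu^{2N}=1$. The relation vectors $(2N,2M)$ and $(2M,2N)$ have determinant $4(N^2-M^2)\neq 0$ because $M\neq N$, so they span a finite-index sublattice of $\bZ^2$ and $\Aut^{q-inn}(\fg)$ is finite.

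The main obstacle, and the only place any structure theory enters, is producing roots whose coordinates are divisible by $N$: this requires both the supply of infinitely many simple roots (so that pigeonhole yields $N$ values in one residue class) and the nonvanishing of iterated brackets of distinct generators of the free algebra $\fu^+$. Once such a root $R$ is in hand, the identification $R\sim -R$ yields a relation in the direction of $R$ itself, and since $R=(N,M)$ is off-diagonal ($M\neq N$) it and its Weyl reflection $(M,N)$ are not proportional, so the two resulting relations are automatically independent. Everything else is the bookkeeping of the reduction in the first paragraph.
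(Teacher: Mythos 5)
Your proof is correct, but it reaches the two independent monomial relations by a genuinely different route than the paper. The paper takes a single simple root $(1,n-1)$ with $n>2N$, uses the $\fgl_2$-structure of $\cV^+$ (Proposition \ref{prop:group-action-on-Lie algebra}) to populate the antidiagonal degrees $(a,n-a)$, brackets $2N$ of those vectors to make the root space at $(nN,nN)$ nonzero, and then extracts its relations entirely from identifications among \emph{positive} roots, namely $(N,n^2N)\sim(nN,nN)\sim(n^2N,N)$. You instead pigeonhole $N$ simple roots into one residue class and bracket them to reach a root $(N,M)$ with $N\mid M$ (your free-associative-algebra justification for nonvanishing is cleaner than the paper's wedge notation), and your relations come from identifying a root space with its own \emph{negative}, $\fg_{N,M}\sim\fg_{-N,-M}$, together with the Weyl reflection. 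That use of opposite roots is legitimate under the definition as written --- the groupoid's isomorphisms $\phi_{r,s}$ are indexed by all $(r,s)\neq(0,0)$, and the no-ghost identifications in the motivating examples do pair a root space with its negative --- and it makes the extraction cleaner, giving relations $\lambda^u\mu^v=1$ directly; but it would not survive a more conservative reading of the groupoid restricted to positive roots, whereas the paper's argument would. On the other side, your sourcing of roots is more careful than the paper's: infinite-dimensionality alone does \emph{not} produce simple roots of unbounded degree (by Lemma 4.1 of \cite{GM2}, $f(q)=q^{-1}+q^2$ yields an infinite dimensional Fricke Lie algebra whose only imaginary simple root is $(1,2)$), so the paper's opening assertion tacitly relies on the groupoid as well, and your appeal to Corollary \ref{cor:trivial-group-compatible-action} supplies exactly the missing input, at the cost of invoking the Hauptmodul machinery of Section \ref{sec:hecke}. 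Two small points to tighten: justify that a genuine Hauptmodul has infinitely many nonzero coefficients (a Laurent polynomial in $q^{1/N}$ stays bounded as $\tau$ approaches a real point, while invariance under a congruence element with nonzero lower-left entry forces a pole there), and note that $\fg_{-N,-M}\neq 0$ (via the Chevalley involution, or the mirror statement for $\fu^-$), since the scalar equation is only forced when both identified spaces are nonzero.
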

\begin{proof}
Since $\fg$ is infinite dimensional, there exists a simple root $(1,n-1)$ for some $n > 2N$.  Then Proposition \ref{prop:group-action-on-Lie algebra} implies $\cV^+$ contains nonzero vectors $e_{a,n-a}$ in degrees $(a,n-a)$ for all $1 \leq a < n$.  Then the root $(nN, nN)$ has positive multiplicity because it contains the nonzero vector $e_{1,n-1} \wedge e_{2,n-2} \wedge \cdots \wedge e_{N,n-N} \wedge e_{n-N,N} \wedge \cdots \wedge e_{n-2,2} \wedge e_{n-1,1} \in \bigwedge^{2N} \cV^+$.  The level $N$ compatibility condition then yields isomorphic nonzero root spaces at $(N, n^2N)$ and $(n^2N,N)$, so for any $(\lambda,\mu) \in GL_1(\bC) \times GL_1(\bC)$, we have $\lambda^N \mu^{n^2N} = \lambda^{nN} \mu^{nN} = \lambda^{n^2N}\mu^N$, implying the three relations $\lambda^{(n-1)N} = \mu^{n(n-1)N}$ and $\lambda^{n(n-1)N} = \mu^{(n-1)N}$ and $\lambda^{(n^2-1)N} = \mu^{(n^2-1)N}$.  Substituting the $n$th power of the first relation into the second, we find that $\mu^{n^2(n-1)N} = \mu^{(n-1)N}$, so $\lambda^{(n^2-1)(n-1)N} = \mu^{(n^2-1)(n-1)N} = 1$.
\end{proof}

We can sharpen this substantially under rather mild conditions.

\begin{prop} \label{prop:inner-automorphisms}
If $\fg$ has a simple root at $(1,kN-1)$ for some $k \geq 3$, then any element of $\Aut^{q-inn}(\fg)$ has order dividing $N$.
\end{prop}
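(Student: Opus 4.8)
The plan is to exhibit enough nonzero root spaces sharing a common product and residue class modulo $N$ so that q-compatibility of the scalars pins $\lambda^N$ and $\mu^N$ down to $1$. Write an element of $\Aut^{q-inn}(\fg)$ as $(\lambda,\mu)$, acting on $\fg_{r,s}$ by $\lambda^r\mu^s$, and set $L=\lambda^N$ and $M=\mu^N$; since the element has order dividing $N$ exactly when $(\lambda^N,\mu^N)=(1,1)$, the claim is precisely $L=M=1$.

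First I would record the two structural inputs. By the $\fgl_2$-module description in Proposition \ref{prop:group-action-on-Lie algebra}, the degree $(m,n)$ part of $\cV^+$ is isomorphic to $\fg_{1,m+n-1}$ (the factor $\fg_{1,-1}^{\otimes m-1}$ is one-dimensional); since $\fg$ has a simple root at $(1,kN-1)$, this gives an entire antidiagonal of nonzero root spaces, $\fg_{m,kN-m}\neq0$ for all $1\le m\le kN-1$. Because $\fu^+$ is free on $\cV^+$ (Lemma \ref{lem:basic-Fricke-properties}), the bracket of two generators in distinct such degrees is a nonzero element, so I can realize a nonzero root space at $(m_1+m_2,\,2kN-m_1-m_2)$. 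Finally, the q-compatibility groupoid does two things at once: it propagates nonvanishing—if $\fg_{r,s}\neq0$ and $(r',s')$ has the same product $rs$ and the same residue modulo $N$, then $\fg_{r',s'}\cong\fg_{r,s}\neq0$, so \emph{every} factorization of an achievable product becomes a nonzero root space—and it forces, for any two such nonzero spaces, the scalar identity $\lambda^r\mu^s=\lambda^{r'}\mu^{s'}$, i.e. $L^{(r-r')/N}M^{(s-s')/N}=1$.

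Then I would build two seeds of residue $(0,0)$. Bracketing the generators at $(1,kN-1)$ and $(kN-1,1)$ gives a nonzero space at $(Nk,Nk)$ of product $k^2N^2$; bracketing two generators whose first coordinates sum to $N(k-1)$ gives a nonzero space at $(N(k-1),N(k+1))$ of product $(k^2-1)N^2$. Propagating through all factorizations, the positions $(Nd,Nk^2/d)$ and $(Nd,N(k^2-1)/d)$ are nonzero for every divisor $d$, and (using $k\ge3$) the divisors $1,k-1,k,k+1,k^2-1,k^2$ are all distinct and genuine. Comparing $(1,k^2)$ with $(k^2,1)$ and $(1,k^2-1)$ with $(k^2-1,1)$ gives
\[ (L/M)^{k^2-1}=1 \quad\text{and}\quad (L/M)^{k^2-2}=1, \]
whence $L=M$ because $\gcd(k^2-1,k^2-2)=1$. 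Comparing $(1,k^2)$ with $(k,k)$ and $(1,k^2-1)$ with $(k-1,k+1)$, and substituting $L=M$, gives
\[ L^{(k-1)^2}=1 \quad\text{and}\quad L^{k(k-2)}=1, \]
whence $L=1$ because $\gcd\big((k-1)^2,\,k(k-2)\big)=1$; therefore $M=1$ as well.

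The main obstacle is threading the achievability constraint against the coprimality requirement. The point is that the residue-$(0,0)$ seeds realizable by brackets necessarily have product $N^2$ times a product of two positive integers summing to a multiple of $k$, so not every product is available; the choice of $k^2$ and $k^2-1$ is forced by the need for two achievable products whose relation-exponents are coprime in the two successive $\gcd$ computations, and $k\ge3$ is exactly what makes all six divisors legitimate. The one small configuration this construction misses is $(N,k)=(1,3)$, where $k^2-1$ is unreachable; there the residue condition is vacuous, so the same method applied to the products $2$, $9$, and $20$ (which yield $\lambda=\mu$, then $\lambda^4=1$, then $\lambda^3=1$) dispatches it directly.
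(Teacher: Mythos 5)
Your argument is correct, and it runs on the same engine as the paper's proof: use Proposition \ref{prop:group-action-on-Lie algebra} to populate the antidiagonal $\{(a,kN-a)\}_{1\le a<kN}$ of $\cV^+$, bracket linearly independent free generators to produce nonzero root spaces, and let the q-compatibility identifications force monomial relations on $(\lambda,\mu)$ that a gcd argument collapses to $\lambda^N=\mu^N=1$. But your choice of seed root spaces is genuinely different, and it buys a cleaner case analysis. The paper's main case brackets generators at $(1,kN-1)$ and $(2N-1,(k-2)N+1)$, which lie in the \emph{same} degree when $N=1$; consequently the paper must treat the entire stratum ``$N=1$ and simple root multiplicity $1$'' separately, by a bootstrap: level~$1$ compatibility produces a simple root at $(1,k^2)$ of multiplicity at least $2$ (after first passing to $k\ge 5$), and the main case is re-run with $k^2+1$ in place of $k$. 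Your seeds $(kN,kN)$ and $(N(k-1),N(k+1))$ come from brackets of generators in \emph{distinct} degrees for every $(N,k)\neq(1,3)$, so multiplicity never enters and exactly one configuration is exceptional; moreover your exponents $(k-1)^2$ and $k(k-2)$, coprime because $(k-1)^2-k(k-2)=1$, replace the paper's fixed exponents $8$ and $3$ and its second gcd step $\gcd(2k-3,4k-8)=1$. So your route trades the paper's bootstrap for a slicker Bezout identity.

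One step you should make explicit: in the $(N,k)=(1,3)$ case, product $20$ is \emph{not} realized by a bracket of two generators --- the available double brackets give products $9$ (from $(3,3)$), and then, after the simple root at $(1,9)$ is forced, products $22,30,36,\dots$, never $20$. You need an iterated bracket, e.g.\ $[[e_{1,2},e_{2,1}],e_{1,2}]\neq 0$ at $(4,5)$, which is nonzero because it is a Hall basis element of the free Lie algebra on two generators; comparison of $(4,5)$ with $(2,10)$ then gives $\lambda^2=\mu^5$, hence $\lambda^3=1$ exactly as you claim. (Alternatively, product $36$ realized at $(4,9)$ by bracketing generators at $(1,2)$ and $(3,7)$, compared with $(6,6)$, gives $\lambda=1$ in one stroke.) With that line added, your proof is complete.
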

\begin{proof}
Proposition \ref{prop:group-action-on-Lie algebra} implies $\cV^+$ has positive dimension in degrees $(a,kN-a)$ for all $1 \leq a < kN$.  

In particular, the root spaces of degree $(N, kN-N)$ and $(kN-N,N)$ have nonzero multiplicity, and the level $N$ condition identifies them.  This implies any inner automorphism $(\lambda, \mu) \in GL_1(\bC) \times GL_1(\bC)$ satisfies $\lambda^N \mu^{kN-N} = \lambda^{kN-N}\mu^N$, so
\begin{equation} \label{eq:inner-first}
\lambda^{(k-2)N} = \mu^{(k-2)N}
\end{equation}

Because $\fu^+$ is freely generated by $\cV^+$, we may combine generators at $(1,kN-1)$ and $(3N-1,(k-3)N+1)$ to get a nonzero root space at $(3N,(2k-3)N)$.  The level $N$ condition identifies the root spaces at $(N, (6k-9)N)$, $(3N, (2k-3)N)$, $((2k-3)N,3N)$, and $((6k-9)N,N)$, so we obtain the equalities
\begin{equation} \label{eq:inner-second-stage-3N}
\lambda^N \mu^{(6k-9)N} = \lambda^{3N}\mu^{(2k-3)N} = \lambda^{(2k-3)N}\mu^{3N} = \lambda^{(6k-9)N}\mu^N.
\end{equation}
The first and third equalities yield $\mu^{(4k-6)N} = \lambda^{2N}$ and $\lambda^{(4k-6)N} = \mu^{2N}$, and combining these with the square of the second equality yields
\begin{equation} \label{eq:inner-second-stage-3N-conclusion}
\lambda^{8N} = \mu^{8N}.
\end{equation}

If $N>1$ or the root multiplicity at $(1,kN-1)$ is strictly greater than 1, then we may combine generators in $\cV^+$ at $(1,kN-1)$ and $(2N-1,(k-2)N+1)$ to get a nonzero root space at $(2N,(2k-2)N)$.  By a similar method we obtain the equalities
\begin{equation} \label{eq:inner-second-stage-2N}
\lambda^N \mu^{(4k-4)N} = \lambda^{2N}\mu^{(2k-2)N} = \lambda^{(2k-2)N}\mu^{2N} = \lambda^{(4k-4)N}\mu^N.
\end{equation}
and a similar deduction yields $\lambda^{3N} = \mu^{3N}$.  From \eqref{eq:inner-second-stage-3N-conclusion} we then get $\lambda^N = \mu^N$.  Then substitutions from \eqref{eq:inner-second-stage-2N} and \eqref{eq:inner-second-stage-3N} yield $\lambda^N  = \mu^N = 1$.

If $N=1$ and $(1,k-1)$ has multiplicity 1, then we first consider the case $k \geq 5$.  Then, $\cV^+$ has nonzero vectors $e_{a,k-a}$ in degree $(a,k-a)$ for $1 \leq a < k$, so the root space at $(k,k)$ contains the linearly independent vectors $e_{1,k-1} \wedge e_{k-1,1}$ and $e_{2,k-2} \wedge e_{k-2,2}$.  By level 1 compatibility, the multiplicity of the simple root $(1,k^2)$ is then at least 2.  We then substitute $k^2+1$ for $k$ and apply the previously considered case to deduce $\lambda = \mu = 1$.

If $N=1$, and $(1,k-1)$ has multiplicity 1, and $3 \leq k < 5$, then we still have a nonzero root space $(k,k)$, since it contains the vector $e_{1,k-1} \wedge e_{k-1,1}$.  Level 1 compatibility then yields a nonzero simple root $(1,k^2)$ and $k^2 > 5$.  Thus, we substitute $k^2+1$ for $k$ and apply the case $k \geq 5$ to deduce $\lambda = \mu = 1$.
\end{proof}

\begin{lem} \label{lem:q-compatible-automorphisms-give-genus-zero-functions}
For any finite order $g \in \Aut^q(\fg)$, the series $\sum_{n \in \bZ} \Tr(g|\fg_{1,n}) q^n$ is trigonometric type or a Hauptmodul.
\end{lem}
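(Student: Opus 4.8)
The plan is to reduce the statement to Theorem \ref{thm:q-compatible-action-gives-hauptmodul} by passing to the cyclic group generated by $g$. Since $g$ is a q-compatible automorphism of $\fg$, every power $g^m$ is as well, so the finite cyclic group $H = \langle g \rangle$ acts on $\fg$ by q-compatible homogeneous automorphisms. First I would check that this action is level $N$ q-compatible in the sense required by the earlier definitions: because each $g^m$ commutes with the groupoid isomorphisms $\phi_{r,s} \colon \fg_{r,s} \simto V^{r,s}_{rs}$, the $H$-module structure on $\fg_{r,s}$ is carried isomorphically onto the one on $V^{r,s}_{rs}$, and the latter depends only on the product $rs$ together with the residue classes of $r$ and $s$ modulo $N$. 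This is precisely the content of the remark that a group acting by symmetries of a Fricke Lie algebra equipped with a level $N$ q-compatibility groupoid acts level $N$ q-compatibly, so no extra verification beyond unwinding the definitions is needed.

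Next I would apply Theorem \ref{thm:q-compatible-action-gives-hauptmodul} to the group $H$ and the finite-order element $h = g$. This yields directly that the normalized trace function $f_{1,0,1}(q)$ is either a Hauptmodul or of the form $a q^{-1/N} + c q^{1/N}$ with $a$ a root of unity and $c$ either zero or a root of unity, i.e. of trigonometric type.

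Finally I would identify this with the series in the statement. Under the $\bZ \times \frac{1}{N}\bZ$-grading used to define $f_{k,\ell,m}$, the summand $V^{1,r}_n$ with $r \equiv n \pmod{\bZ}$ is identified through the groupoid with the simple root space $\fg_{1,n}$, so that $f_{1,0,1}(q) = \sum_{n \in \frac{1}{N}\bZ} \Tr(g \mid \fg_{1,n}) q^n$. Reindexing by the integer grading of Lemma \ref{lem:basic-Fricke-properties} gives $f_{1,0,1}(q^N) = \sum_{n \in \bZ} \Tr(g \mid \fg_{1,n}) q^n$, which is consistent with the relation $f_g(q) = \frac{1}{\zeta_g} f_{1,0,1}(q^N)$ recorded after Corollary \ref{cor:second-twisted-denominator}. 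It then remains to observe that the substitution $q \mapsto q^N$, corresponding to $\tau \mapsto N\tau$ on $\fH$, carries a Hauptmodul to a Hauptmodul (for the conjugated discrete subgroup of $SL_2(\bR)$) and sends $a q^{-1/N} + c q^{1/N}$ to $a q^{-1} + c q$, which is again of trigonometric type; this gives the dichotomy for $\sum_{n \in \bZ} \Tr(g \mid \fg_{1,n}) q^n$.

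I expect the only real obstacle to be the bookkeeping in this last step: one must track the two gradings, the normalizing factor $\zeta_g$, and the $q \mapsto q^N$ reparametrization consistently, and confirm that \emph{Hauptmodul} (and not merely \emph{modular function}) is preserved under $\tau \mapsto N\tau$. Everything else is a direct appeal to the cited theorem.
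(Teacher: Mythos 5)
Your proposal is correct and takes essentially the same route as the paper: the paper's entire proof is a direct appeal to Theorem \ref{thm:q-compatible-action-gives-hauptmodul}, exactly the reduction you carry out, with the passage to $H = \langle g \rangle$, the level $N$ compatibility of a groupoid-preserving action, and the $q \mapsto q^N$ reindexing all left implicit in its ``follows immediately.'' Your explicit verification of those points --- in particular that conjugation by $\tau \mapsto N\tau$ carries a Hauptmodul to a Hauptmodul and $a q^{-1/N} + c q^{1/N}$ to the trigonometric-type series $aq^{-1}+cq$ --- is accurate and fills in precisely what the paper glosses over.
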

\begin{proof}
This follows immediately from Theorem \ref{thm:q-compatible-action-gives-hauptmodul}.
\end{proof}

We now focus on the case $\fg = \fm$ equipped with the level $1$ q-compatibility groupoid coming from $V^\natural$.  We note that by Proposition \ref{prop:inner-automorphisms}, the level 1 condition means there are no nontrivial q-compatible inner automorphisms of $\fm$.  As we mentioned in the introduction, $\Aut^q(\fm)$ contains $\bM$ by Borcherds's proof of Monstrous Moonshine, and we conjecture that $\Aut^q(\fm) \cong \bM$.  Evidence in favor of the conjecture amounts to results asserting that $\Aut^q(\fm)$ is ``not much larger'' than $\bM$.

\begin{lem} \label{lem:automorphisms-are-trivial-on-real-root}
Any element $g \in \Aut^q(\fm)$ acts by identity on the real simple root space $\fm_{1,-1}$.
\end{lem}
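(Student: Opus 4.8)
The plan is to pin down the scalar by which $g$ acts on the one-dimensional real root space and show it must equal $1$, by exploiting a single bracket that preserves the product grading together with the q-compatibility identifications. Since $\fm$ has exactly one real simple root and it has multiplicity one, $\fm_{1,-1}$ is one-dimensional, so $g$ acts on it by a scalar $\zeta$, and the claim is that $\zeta = 1$. First I would record the cheap constraints. Because the level is $1$, the groupoid identifies $\fm_{1,-1}$ with $\fm_{-1,1}$ (both root spaces have product $-1$), so $g$ acts by the same scalar $\zeta$ on $\fm_{-1,1}$. Writing $e_0, f_0, h_0 = [e_0,f_0]$ for the $\fsl_2$-triple of the real root, the automorphism property gives $g(h_0) = \zeta^2 h_0$, and then $[g h_0, g e_0] = 2\zeta^3 e_0$ must equal $g(2 e_0) = 2\zeta e_0$, forcing $\zeta^2 = 1$. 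Thus at worst $\zeta = \pm 1$, and the real content of the lemma is to exclude $\zeta = -1$.

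For the main step I would use the product-preserving piece of the adjoint $\fsl_2$-action. The operator $\ad(e_0)$ sends $\fm_{1,2}$ to $\fm_{2,1}$; since $(1,2)$ is a lowest weight (as $\ad(f_0)$ kills it, $(0,3)$ being no root) and the $\fsl_2$-string has length two, $\ad(e_0)\colon \fm_{1,2}\to\fm_{2,1}$ is an isomorphism of $c(2)$-dimensional spaces. Both spaces have product $2$, so the q-compatibility groupoid supplies an isomorphism $\psi\colon \fm_{1,2}\to\fm_{2,1}$ with which $g$ commutes: $g|_{\fm_{2,1}}\circ\psi = \psi\circ g|_{\fm_{1,2}}$. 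On the other hand, because $g$ is a Lie automorphism with $g(e_0) = \zeta e_0$, the bracket is equivariant up to $\zeta$: $g|_{\fm_{2,1}}\circ\ad(e_0) = \zeta\,\ad(e_0)\circ g|_{\fm_{1,2}}$. The plan is to identify these two isomorphisms: if $\ad(e_0) = c\,\psi$ for a nonzero scalar $c$, then the equivariance relation becomes $g|_{\fm_{2,1}}\circ\psi = \zeta\,\psi\circ g|_{\fm_{1,2}} = \zeta\,(g|_{\fm_{2,1}}\circ\psi)$, using the q-compatibility relation in the last equality, and since $g|_{\fm_{2,1}}\circ\psi$ is invertible this forces $\zeta = 1$.

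The crux, and the step I expect to be the main obstacle, is therefore to show that bracketing with the real root vector realizes the q-compatibility identification $\psi$ up to scale; this is where I would invoke the no-ghost construction of Section \ref{sec:no-ghost} rather than the abstract groupoid axioms, which say nothing a priori about the bracket. The real root space $\fm_{1,-1}$ is the image of the vacuum line $V^\natural_{-1} = \bC\unit$, so $e_0$ has trivial transverse content, and under the identification $\fm_{1,2}\cong V^\natural_2\otimes\bC|\lambda_{1,2}\rangle$ the operator $\ad(e_0)$ shifts the Heisenberg momentum $\lambda_{1,2}\mapsto \lambda_{1,2}+\lambda_{1,-1} = \lambda_{2,1}$ — a shift preserving the norm precisely because the bracket preserves the product $mn = 2$ — while acting as a scalar on the transverse factor $V^\natural_2$. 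Since $\psi$ is by construction the identification sending $|\lambda_{1,2}\rangle\mapsto|\lambda_{2,1}\rangle$ and fixing the transverse factor, $\ad(e_0)$ and $\psi$ agree up to the nonzero scalar $c$, which closes the argument. The delicate point to verify is that this transverse action is genuinely scalar, i.e. that the cocycle and DDF bookkeeping contribute no nontrivial operator on $V^\natural_2$; this is exactly the product-preserving case of the root-space identifications furnished by the no-ghost theorem, and it is the one place where the specific $V^\natural$-groupoid, rather than an arbitrary q-compatibility groupoid, is essential.
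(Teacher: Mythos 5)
Your preliminary steps are fine: $\zeta^2=1$ follows as you say (indeed, since roots span the dual of the Cartan, any homogeneous automorphism fixes the Cartan pointwise, giving $\zeta\,\zeta'=1$ directly), and the reduction ``if $\ad(e_0)$ is a nonzero multiple of the groupoid map $\psi$, then $\zeta=1$'' is correct. The gap is your crux claim that $\ad(e_0)\colon \fm_{1,2}\to\fm_{2,1}$ \emph{is} the groupoid identification up to scalar. This is not ``exactly the product-preserving case of the root-space identifications furnished by the no-ghost theorem'': the no-ghost theorem, as stated and used in the paper, identifies root spaces with subspaces of $V^\natural$ and guarantees compatibility with abelian intertwining algebra automorphisms; it makes no assertion about how the Lie bracket interacts with those identifications. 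Your claim is an additional substantive statement that would need its own proof. Concretely, $\ad(e_0)=\mathrm{id}_{V^\natural}\otimes\bigl(e^{(1,-1)}\bigr)_0$, and since no null vector of $\bR^{1,1}$ is orthogonal to $(1,-1)$, this operator fails to commute with the null-direction oscillators used to define the spaces $K$ and $T$ in the chain $P^{L_0=1}/\mathrm{rad}\cong T\cong K/\mathrm{rad}\cong V^{L_0=1-\lambda^2}\otimes\bC\,|\lambda\rangle$; on the non-primary part of $V^\natural_3$ (e.g.\ $L_{-1}V^\natural_2$) the $T$-representative is a dressed state, and the oscillator corrections produced by $\bigl(e^{(1,-1)}\bigr)_0$ must be projected back and computed --- this is real work, not bookkeeping. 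Nor can $\bM$-equivariance close the gap: $V^\natural_3$ decomposes into three distinct irreducibles, so the commutant of $\bM$ there is $\bC^3$, not the scalars.

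The paper's proof shows the hard step is unnecessary, and it is worth seeing why. Instead of asking the bracket to \emph{coincide} with the groupoid map, it only uses that both are $\Aut^q(\fm)$-module isomorphisms. The bracket map $\fm_{1,2}\otimes\fm_{1,-1}\to\fm_{2,1}=\cV^+_{2,1}$, $v\otimes x\mapsto [x,v]$, is an isomorphism by the $\fgl_2$-structure (Proposition \ref{prop:group-action-on-Lie algebra}, plus the fact that $\fm_{2,1}$ contains no brackets of elements of $\cV^+$ for degree reasons), and it is tautologically equivariant for \emph{any} homogeneous automorphism, because automorphisms preserve brackets: the scalar $\zeta_g$ is carried by the tensor factor $\fm_{1,-1}$ rather than appearing as a twist, which is exactly the trick you were missing. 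Combining this with the groupoid isomorphism $\fm_{1,2}\cong\fm_{2,1}$ gives $\fm_{1,2}\cong\fm_{1,2}\otimes\fm_{1,-1}$ as $\Aut^q(\fm)$-modules, whence the one-dimensional representation $\fm_{1,-1}$ is trivial. Note also that this argument uses only the abstract groupoid axioms, so it applies to any level $1$ q-compatibility groupoid on $\fm$; your closing assertion that the specific $V^\natural$-groupoid is essential is the opposite of what the paper's proof demonstrates.
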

\begin{proof}
This follows immediately from the $\Aut^q(\fm)$-module isomorphism $\fm_{1,2} \cong \fm_{2,1}$ induced by the q-compatibility groupoid, together with the $\Aut^q(\fm)$-module isomorphism $\cV^+_{2,1} \cong \cV^+_{1,2} \otimes \fm_{1,-1}$ given in Proposition \ref{prop:group-action-on-Lie algebra}.
\end{proof}

\begin{prop} \label{prop:prime-order-automorphisms}
Let $g$ be an element of prime order $p$ in $\Aut^q(\fm)$.  Then $p$ divides $|\bM|$, i.e., $p \in \{2, 3, 5, 7, 11, 13, 17, 19, 23, 29, 31, 41, 47, 59, 71\}$.
\end{prop}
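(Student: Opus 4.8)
The plan is to show that $g$ produces a normalized Hauptmodul whose level is a power of $p$, and then to quote Ogg's genus computation \cite{O74}. Set $T(q) = \sum_{n \in \bZ}\Tr(g|\fm_{1,n})q^n$, which is the series $f_{1,0,1}(q)$ of Section \ref{sec:hecke} in the level $1$ case $\fg = \fm$. Because $g$ has order $p$, each coefficient is a sum of $p$-th roots of unity, hence an algebraic integer of $\bZ[\zeta_p]$, and $g^p = 1$ gives $\Tr(g^p|\fm_{1,n}) = \dim \fm_{1,n}$, so the $p$-th replicate $f_{1,0,p}$ is exactly $J$. By Lemma \ref{lem:automorphisms-are-trivial-on-real-root} the element $g$ fixes the one-dimensional space $\fm_{1,-1}$, so $\zeta_g = 1$ and $T(q) = q^{-1} + O(q)$; moreover $g \neq 1$ forces $T \neq J$, since an order-$p$ operator with $\Tr(g|\fm_{1,n}) = \dim \fm_{1,n}$ for all $n$ would act as the identity on every simple root space and hence, by Lemma \ref{lem:diagram-automorphism-group}, equal $1$.

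First I would rule out the degenerate case of Theorem \ref{thm:q-compatible-action-gives-hauptmodul}. That theorem leaves two possibilities for $T$: a genuine Hauptmodul, or a trigonometric-type series, which here (leading coefficient $1$, vanishing constant term) must be $q^{-1} + cq$. Trigonometric-type series are closed under replication, being exactly the degenerate completely replicable series of \cite{CG97} and \cite{GM1}; so if $T$ had this shape then its $p$-th replicate $J$ would also be trigonometric type, contradicting the fact that $J$ has infinitely many nonzero coefficients. Thus $T$ is a Hauptmodul for a genus-zero group $\Gamma_T < SL_2(\bR)$, and $\Gamma_T \neq SL_2(\bZ)$ because $T \neq J$.

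Next I would control the level. The replicates $f_{1,0,m}$ depend only on $m \bmod p$ since $g^p = 1$, and $T$ is $J$-final and completely replicable (Proposition \ref{prop:Hecke-monic}). Martin's theorem \cite{M94} then makes $T$ invariant under some $\Gamma_0(N)$, and the modular-equation techniques of \cite{K94} and \cite{CG97}, in the form used in \cite{GM1}, convert the period-$p$ behaviour of the replicates into the statement that the prime divisors of the level all divide $p$; together with the classical bound $h \mid 24$ on the fusion parameter of a genus-zero group of Moonshine type, this gives $N = p^a$ with $a = 1$ unless $p \mid 24$, in which case $a \leq 2$. I expect this to be the crux of the argument: reproducing, from the periodicity of the replicates alone and with no appeal to the Monster, that the level is a prime power $p^a$ whose exponent is bounded through divisibility into $24$.

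Finally, genus zero at prime-power level finishes the proof. When $a = 1$, the only groups between $\Gamma_0(p)$ and its normalizer in $SL_2(\bR)$ are $\Gamma_0(p)$ and the Fricke extension $\Gamma_0(p)+$, so the genus-zero group $\Gamma_T$ is one of these, and at least one of $X_0(p)$, $X_0(p)^{+}$ has genus zero. Ogg's computation \cite{O74} identifies the primes with $X_0(p)^{+}$ of genus zero as the fifteen listed primes, and the few primes with $X_0(p)$ of genus zero (namely $2,3,5,7,13$) are among them; hence $p$ lies in the list. The remaining possibility $a = 2$ occurs only for $p \mid 24$, i.e. $p \in \{2,3\}$, which are already in the list. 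Therefore $p$ divides $|\bM|$.
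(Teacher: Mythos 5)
Your proposal is correct and follows essentially the same route as the paper's proof: establish complete replicability of the character (you via Proposition \ref{prop:Hecke-monic} and Theorem \ref{thm:q-compatible-action-gives-hauptmodul}, the paper via Section 9 of \cite{B92} together with \cite{K94} and \cite{CG97}), rule out the degenerate trigonometric/modular-fiction case because the $p$-th replicate is $J$, and then pin the level to a power of $p$ so that genus zero plus Ogg's computation \cite{O74} forces $p$ to divide $|\bM|$. Your explicit check that $T \neq J$ (via Lemma \ref{lem:diagram-automorphism-group}) and your spelled-out normalizer classification are just finer details of the same argument, which the paper compresses into its citations.
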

\begin{proof}
Let $T_g(q) = \sum_{n \in \bZ} \Tr(g|\fg_{1,n}) q^n$ be the character.  By Lemma \ref{lem:automorphisms-are-trivial-on-real-root}, this series has the form $q^{-1} + O(q)$, so from Section 9 of Borcherds's proof of the Monstrous Moonshine conjecture \cite{B92}, we know that $T_g(q)$ is completely replicable.  Then, by \cite{K94} and \cite{CG97}, it is either a modular fiction of the form $q^{-1} + aq$ or a Hauptmodul for a group $\Gamma_g \subset SL_2(\bR)$ satisfying the following properties:
\begin{enumerate}
\item $\Gamma_g$ contains $\Gamma_0(p^k)$ for some positive integer $k$.
\item $\fH/\Gamma_g$ is genus zero.
\item The intersection of $\Gamma_g$ with $\{ \left( \begin{smallmatrix} 1 & x \\ 0 & 1 \end{smallmatrix} \right) | x \in \bR \}$ is precisely the subset for which $x \in \bZ$.
\end{enumerate}
The replicates of $T_g(q)$ are given by characters of powers of $g$, and $T_1(q) = J(q)$.  This implies $T_g(q)$ cannot be a modular fiction, since replicates of modular fictions are all modular fictions.  It therefore suffices to classify $\Gamma_g$, and Ogg \cite{O74} showed that $p$ must then lie in the given list. 
\end{proof}

\begin{prop} \label{prop:injectivity-for-q-compatible-maps}
Let $c(n) = \dim \fm_{1,n}$ denote the simple root multiplicity.  The composite 
\[ \Aut^q(\fm) \hookrightarrow \prod_{n=-1}^\infty GL_{c(n)}(\bC) \to \prod_{n=1}^5 GL_{c(n)}(\bC) \]
is injective.  That is, any q-compatible automorphism of $\fm$ is uniquely determined by its restriction to the root spaces $\{ \fm_{1,n} \}_{n=1}^5$.
\end{prop}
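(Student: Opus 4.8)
The plan is to show that the composite has trivial kernel, i.e.\ that if $h \in \Aut^q(\fm)$ acts as the identity on each of $\fm_{1,1},\dots,\fm_{1,5}$, then $h$ acts as the identity on every simple root space $\fm_{1,n}$; by Lemma~\ref{lem:diagram-automorphism-group} this forces $h=\mathrm{id}$. Write $P(n)$ for the assertion that $h$ acts trivially on $\fm_{1,n}$. The hypotheses give $P(1),\dots,P(5)$ (the factors at $n=0$ and $n=-1$ being irrelevant, since $c(0)=0$ and $h$ is trivial on $\fm_{1,-1}$ by Lemma~\ref{lem:automorphisms-are-trivial-on-real-root}), and I would prove $P(n)$ for all $n$ by strong induction.

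First I would record the rigidity coming from Lemma~\ref{lem:automorphisms-are-trivial-on-real-root}: since $h$ fixes the real simple root space $\fm_{1,-1}$ pointwise, and the level $1$ q-compatibility groupoid identifies $\fm_{1,-1}$ with $\fm_{-1,1}$, the automorphism $h$ fixes both $e_0$ and $f_0$ and hence commutes with $\ad(e_0)$ and $\ad(f_0)$. Consequently $h$ preserves the $\fgl_2$-module $\cV^+$ and the bracket-length grading of the free Lie algebra $\fu^+$, and by Proposition~\ref{prop:group-action-on-Lie algebra} its action on the degree-$(m,n)$ generator space $\cV^+_{m,n}\cong\fm_{1,m+n-1}$ is exactly its action on $\fm_{1,m+n-1}$ (the factor $\fm_{1,-1}^{\otimes m-1}$ being fixed). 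This yields the module-level (``categorical'') replication identity: each root space $\fm_{a,b}=\fu^+_{a,b}$ is, as an $h$-module, a bracket-length-graded direct sum of tensor and exterior powers of generator spaces $\fm_{1,j}$, where $j$ ranges over the generator indices occurring in degree $(a,b)$, all bounded by $a+b-1$ (the single-generator term), with the multi-generator terms bounded by $a+b-3$.

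For composite $n=ab$ with $a,b\geq 2$, the q-compatibility groupoid gives an $h$-equivariant isomorphism $\fm_{1,n}\cong\fm_{a,b}$, and $(a-1)(b-1)\geq 1$ forces $a+b-1<ab=n$, so every generator index appearing is $<n$; thus $\{P(j):j<n\}$ yields $P(n)$. The main obstacle is the prime case, where $n=p$ admits no factorization with both factors $\geq 2$ and q-compatibility relates $\fm_{1,p}$ only to itself. I would resolve this by a two-step detour through the composite $2p-2$, valid for $p\geq 7$. Step one: using the factorization $2p-2=4\cdot\tfrac{p-1}{2}$, the generator indices occurring in $\fm_{4,(p-1)/2}$ are all at most $\tfrac{p+5}{2}<p$, so $\{P(j):j<p\}$ gives triviality of $h$ on all those generators and hence on $\fm_{4,(p-1)/2}\cong\fm_{1,2p-2}$, establishing $P(2p-2)$ without invoking $P(p)$. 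Step two: the factorization $2p-2=2\cdot(p-1)$ gives the $h$-equivariant splitting $\fm_{1,2p-2}\cong\fm_{2,p-1}\cong\fm_{1,p}\oplus(\text{bracket-length-two terms})$, whose length-two terms involve only indices $\leq p-2$; since $h$ is trivial on the left side (Step one) and on the length-two summand, the block-diagonal action forces $P(p)$.

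Assembling these, strong induction on $n$ (base cases $n\leq 5$, composites handled directly, primes $p\geq 7$ via the detour) gives $P(n)$ for all $n\geq 1$, hence $h=\mathrm{id}$. I expect the prime case to be the crux, and the genuinely subtle point is checking that the detour through $2p-2$ never recursively reappeals to $P(p)$: this is precisely why Step one uses the factorization $4\cdot\tfrac{p-1}{2}$, whose leading generator index $\tfrac{p+5}{2}$ drops strictly below $p$ exactly when $p\geq 7$, rather than the circular factorization $2\cdot(p-1)$ whose single-generator term is $\fm_{1,p}$ itself; it is also the reason the base case must extend through $n=5$.
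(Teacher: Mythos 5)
Your proposal is correct and is essentially the paper's own argument: both proofs rest on the Jurisich free Lie algebra decomposition together with Proposition \ref{prop:group-action-on-Lie algebra}, Lemma \ref{lem:automorphisms-are-trivial-on-real-root}, and Lemma \ref{lem:diagram-automorphism-group}, on the level-one identification of root spaces whose bidegrees have equal product, and on strong induction anchored in degrees at most $5$. Your composite/prime bookkeeping, including the prime-case detour $\fm_{1,p}\oplus(\text{length-two terms})\cong\fm_{2,p-1}\cong\fm_{1,2p-2}\cong\fm_{4,(p-1)/2}$, is precisely an instance of the paper's uniform combinatorial condition (existence of positive integers $(a,b,c,d)$ with $ab=cd$, $a+b=n+1$, $c+d<n+1$, which holds exactly when $n\notin\{1,2,3,5\}$, the paper's tuple for a prime $p\geq 7$ being $(2,p-1,4,\tfrac{p-1}{2})$), so the two arguments coincide up to packaging, with your explicit bracket-length formulation making visible the point--handling of unipotent elements via actual rather than virtual decompositions--that the paper defers to the remark following its proof.
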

\begin{proof}
We combine the isomorphism
\begin{equation} \label{eq:fricke-virtual-identity}
\sum_{k=1}^\infty \frac{1}{k} (\cV^+)^k = \sum_{k=1}^\infty \frac{1}{k} \psi^k(\fu^+)
\end{equation}
of graded virtual $\Aut^q(\fm)$-modules with the level 1 q-compatibility groupoid.  Let $n$ be a positive integer such that there exists a 4-tuple $(a,b,c,d)$ of positive integers satisfying the conditions $ab = cd$, $a+b = n+1$, and $c+d < n+1$.  A short calculation shows that the set of such positive integers $n$ is the complement of $\{1,2,3,5\}$.  Fixing such a choice of $(a,b,c,d)$, by equation \eqref{eq:fricke-virtual-identity}, $\fm_{a,b}$ is the direct sum of $\cV^+_{a,b}$ with a combination of tensor products of subspaces of $\cV^+$ whose bidegree sums to strictly less than $n+1$.  Equivalently, $\fm_{a,b}$ is isomorphic to the direct sum of the simple root space $\fm_{1,n}$ with a virtual $\Aut^q(\fm)$-module that is $\bQ$-linearly tensor generated by $\{ \fm_{1,m}\}_{1\leq m<n}$.  However, the level 1 q-compatibility groupoid contains an isomorphism from $\fm_{a,b}$ to $\fm_{c,d}$, and $\fm_{c,d}$ is a $\bQ$-linear combination of tensor products of subspaces $\cV^+$ whose bidegree sums to strictly less than $n+1$.  This implies the $\Aut^q(\fm)$-module structure of $\fm_{1,n}$ is completely determined by the $\Aut^q(\fm)$-module structure of the set $\{ \fm_{1,m}\}_{1\leq m<n}$.  Thus, the action of $\Aut^q(\fm)$ restricted to $\{ \fm_{1,n}\}_{n=1}^5$ completely determines the action on all simple root spaces, hence on $\fm$.
\end{proof}

\begin{rem}
We note that if we only worked with traces, hence completely replicable functions, we would only get this result for semisimple automorphisms.  It is only by applying the categorified version of complete replicability that we can show that unipotent automorphisms are identity.
\end{rem}

We now consider some cases where there are strictly more q-compatible automorphisms than expected from the vertex-algebraic input to the ``add a torus and quantize'' functor.  This behavior is an indication of how the functor loses information, and can be considered a weak form of evidence against the conjecture that $\Aut^q(\fm) \cong \bM$.

\begin{prop}
There exists a pair of abelian intertwining algebras ${}^g V^\natural$ and ${}^hV^\natural$ whose automorphism groups are not isomorphic, but yield isomorphic Fricke Lie algebras $L_g, L_h$ with equivalent q-compatibility groupoids under the ``add a torus and quantize'' functor.
\end{prop}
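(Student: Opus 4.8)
The plan is to reduce the statement to a character-theoretic coincidence inside the Monster. First I would use the rigidity of Fricke Lie algebras: by Corollary \ref{cor:trivial-group-compatible-action} (equivalently Lemma 4.1 of \cite{GM2}), the isomorphism type of a Fricke Lie algebra together with its level $N$ q-compatibility groupoid is completely determined by its simple-root multiplicities $c(1,n)=\dim \fg_{1,n}$, i.e. by the Fourier coefficients of the associated Hauptmodul. Under the ``add a torus and quantize'' functor these multiplicities, and the level $N$, are read off directly from the McKay-Thompson series $T_g$. Consequently, if $g$ and $h$ are Fricke elements of $\bM$ with $T_g=T_h$, the functor produces graded Lie algebras $L_g\cong L_h$ with the same $N$, and because the identifications of root spaces furnished by the no-ghost theorem depend only on the bidegree data, the associated q-compatibility groupoids are equivalent. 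This settles the ``isomorphic Fricke Lie algebra'' half of the claim and reduces everything to exhibiting such a pair whose abelian intertwining algebras have non-isomorphic automorphism groups.

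Next I would pin down what ``same $T_g$'' forces. Since the order of $g$ is recoverable from $T_g$ (it is the least $m$ with $T_{g^m}=J$, and each $T_{g^m}$ is determined by $T_g$ through replication), the condition $T_g=T_h$ forces $|g|=|h|$. The automorphism group of ${}^g_N V^\natural$ is in turn governed by a central extension of the centralizer $C_{\bM}(g)$, so it suffices to find two Fricke classes of equal order that share a McKay-Thompson series but have non-isomorphic centralizers. One caveat must be built into the search: if $h$ is $\bM$-conjugate to a power $g^k$ with $\gcd(k,|g|)=1$, then $C_{\bM}(g)=C_{\bM}(g^k)$ as subgroups, so algebraically conjugate classes automatically have equal centralizers and cannot serve. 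The pair we want must therefore be a \emph{non-algebraic} coincidence of the McKay-Thompson assignment.

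The remaining step is to produce the pair explicitly. I would search the Conway-Norton tables in \cite{CN79} (equivalently, the fusion and centralizer data in the character table of $\bM$) for two distinct, non-power-conjugate Fricke classes of the same order carrying the same Hauptmodul, and then verify three things: that the common function is invariant under its Fricke involution, so that both $L_g$ and $L_h$ are genuinely Fricke Lie algebras; that the $q$-expansions agree, which is automatic and guarantees matching simple-root multiplicities; and that the two centralizers are non-isomorphic as abstract groups, which one reads off from their orders and structure.

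The hard part is exactly this existence-and-verification step. It rests on the fact --- perhaps counterintuitive given that $V^\natural$ is a faithful $\bM$-module --- that the graded character does not separate all conjugacy classes even modulo algebraic conjugacy, so that a genuine non-algebraic coincidence occurs; and it requires confirming that for at least one such coincidence both representatives are Fricke and their centralizers are genuinely non-isomorphic. Everything preceding this is formal: the rigidity input is Corollary \ref{cor:trivial-group-compatible-action}, and the reduction to centralizers only uses that the symmetry group of the abelian intertwining algebra sees $C_{\bM}(g)$.
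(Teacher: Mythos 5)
Your reduction is sound, and it mirrors the paper's own strategy exactly: a Fricke Lie algebra together with its canonical q-compatibility groupoid is determined by the simple-root multiplicities, hence by the McKay--Thompson series, so the proposition comes down to exhibiting two Fricke classes of $\bM$ with the same series but non-isomorphic centralizers. The problem is that this is precisely where your argument stops. You describe a search you \emph{would} perform over the Conway--Norton tables and explicitly defer the ``existence-and-verification step'' as the hard part --- but that step is the entire content of the proposition. An existence statement is not proved by a correct reduction plus a plan to look for a witness; as written, you have only established the conditional claim ``if such a non-algebraic coincidence of classes exists, then the proposition holds.''

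The paper closes exactly this gap by naming the pair: $g$ in class 27A and $h$ in class 27B. These classes have identical McKay--Thompson series, so $L_g \cong L_h$ with equivalent groupoids (identified by any choice of isomorphisms of simple root spaces), while Table 2 of \cite{CN79} shows their centralizers have different orders, so $C_{\bM}(g) \not\cong C_{\bM}(h)$ and the abelian intertwining algebras ${}^g V^\natural$ and ${}^h V^\natural$ have non-isomorphic automorphism groups. Your caveat about excluding power-conjugate classes is a sensible refinement that the paper leaves implicit (non-isomorphism of the centralizers, checked via their orders, automatically rules out algebraic conjugacy), and your observation that $T_g = T_h$ forces $|g| = |h|$ is correct but not needed once a concrete pair is in hand. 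To complete your proof you would need to do what the paper does: produce the pair and verify the centralizer orders, rather than assert that such a pair should exist.
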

\begin{proof}
Let $g \in \bM$ lie in conjugacy class 27A, and let $h$ lie in 27B.  These two classes have identical McKay-Thompson series, hence their monstrous Lie algebras $L_g$ and $L_h$ have equal simple root multiplicities.  The canonical q-compatibility groupoids are then identified by any choice of isomorphisms of simple root spaces.  However, Table 2 of \cite{CN79} indicates the centralizers of these classes are not the same order, so the abelian intertwining algebras ${}^g V^\natural$ and ${}^hV^\natural$ have non-isomorphic automorphism groups.  Thus, we have two distinct natural q-compatible actions of groups on the same Lie algebra.
\end{proof}

We now briefly consider the Frenkel-Lepowsky-Meurman uniqueness conjecture.  The original conjecture asserts that $V^\natural$ is characterized by the following 3 properties:
\begin{enumerate}
\item $V^\natural$ is the only irreducible $V^\natural$-module.
\item $V^\natural$ has central charge 24.
\item $V^\natural_1 = 0$, i.e., $V^\natural$ has no nonzero vectors of weight 1.
\end{enumerate}
It is more convenient to consider a slightly weaker version of the conjecture, asserting that any holomorphic $C_2$-cofinite vertex operator algebra with character equal to $J$ is isomorphic to $V^\natural$.  Any counterexample to the original conjecture that is not a counterexample to this version would have to be quite exotic.

\begin{prop}
Let $V$ be a holomorphic $C_2$-cofinite vertex operator algebra whose character is $J$, such that $V \not\cong V^\natural$.  Then the following hold:
\begin{enumerate}
\item Applying the ``add a torus and quantize'' functor to $V$ yields a Fricke Lie algebra isomorphic to $\fm$ with equivalent level 1 q-compatibility groupoid.  In particular, we obtain an injection $\Aut(V) \to \Aut^q(\fm)$.
\item For any finite-order automorphism $g$ of $V$, the series $\Tr(g q^{L_0-1}|V)$ is the q-expansion of a completely replicable Hauptmodul.  In particular, we may classify finite-order elements of $\Aut(V)$ as Fricke or non-Fricke.
\item There is no non-Fricke involution on $V$.
\end{enumerate}
\end{prop}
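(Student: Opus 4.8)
The plan is to treat parts (1) and (2) as straightforward consequences of the quantization picture, and to reduce the real content, part (3), to an orbifold-uniqueness argument run ``in reverse''.

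For part (1), since $V$ has character $J$, the no-ghost theorem identifies the degree $(a,b)$ piece of the quantization of $V$ with $V^{L_0=1+ab}$ (here $N=1$), so the root $(m,n)$ has multiplicity equal to the coefficient of $q^{mn}$ in $J$, exactly as for $V^\natural$. By Lemma 4.1 of \cite{GM2} a Fricke Lie algebra is determined up to isomorphism by its power series of simple-root multiplicities, so the quantization of $V$ is isomorphic to $\fm$, and its level $1$ q-compatibility groupoid---being nothing but the system of no-ghost identifications of equal $L_0$-eigenspaces---is equivalent to the one attached to $V^\natural$. The induced map $\Aut(V)\to\Aut^q(\fm)$ is given by the Proposition on abelian-intertwining-algebra automorphisms applied with $g=1$, and it is injective because the equivariant identifications $\fm_{1,n}\cong V_{n+1}$ exhaust the graded pieces of $V$: an automorphism trivial on every $\fm_{1,n}$ is trivial on every $V_k$, hence on $V$. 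For part (2), writing $\tilde g\in\Aut^q(\fm)$ for the image of a finite-order $g$, Lemma \ref{lem:automorphisms-are-trivial-on-real-root} gives $\zeta_{\tilde g}=\Tr(\tilde g|\fm_{1,-1})=1$, so $f_{1,0,1}(q)=\sum_n\Tr(\tilde g|\fm_{1,n})q^n$ equals $\Tr(g\,q^{L_0-1}|V)$. Theorem \ref{thm:q-compatible-action-gives-hauptmodul} makes this a Hauptmodul or of trigonometric type $aq^{-1}+cq$, and complete replicability follows from Proposition \ref{prop:Hecke-monic} (equivalently, Section 9 of \cite{B92}). The trigonometric case is excluded exactly as in Proposition \ref{prop:prime-order-automorphisms}: one replicate is $T_1=J$, which is not of trigonometric type, while all replicates of a trigonometric-type function are. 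A completely replicable Hauptmodul of finite order is attached to a genus-zero group $\Gamma_g$ and is Fricke or non-Fricke according to whether $\Gamma_g$ contains the Fricke involution, giving the asserted classification.

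The heart of the proposition is part (3), which I would prove by contradiction. Suppose $g$ is a non-Fricke involution of $V$. Since $V$ is holomorphic and $C_2$-cofinite, the cyclic orbifold theory (existence and uniqueness of the irreducible $g$-twisted module, and holomorphicity of the orbifold) applies, so the $g$-orbifold $\tilde V=V^{\mathrm{orb}(g)}$ is again a holomorphic $C_2$-cofinite vertex operator algebra of central charge $24$. Its character is a weight-zero modular function for $SL_2(\bZ)$ assembled from $J$ and the non-Fricke series $T_g$ by the standard order-two orbifold formula, hence of the form $J+k$ for a non-negative integer $k=\dim\tilde V_1$, and the calculation yields $\chi_{\tilde V}=J+24$, so $\dim\tilde V_1=24$. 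By the uniqueness results of \cite{DM04} for holomorphic $c=24$ vertex operator algebras with $24$-dimensional weight-one space, $\tilde V\cong V_\Lambda$, the Leech lattice vertex operator algebra. Finally, orbifold duality identifies $V$ with the orbifold of $\tilde V\cong V_\Lambda$ by the quantum symmetry $\hat g$; classifying the involutions of $V_\Lambda$ by their orbifold characters shows that the only one whose orbifold has character $J$ is the Frenkel--Lepowsky--Meurman involution, whose orbifold is by construction $V^\natural$. Hence $V\cong V^\natural$, contradicting $V\not\cong V^\natural$, so no non-Fricke involution can exist.

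\textbf{Main obstacle.} Everything delicate is concentrated in (3), and the point is to run the orbifold argument \emph{without} invoking the Frenkel--Lepowsky--Meurman uniqueness conjecture itself, which would be circular since $V$ is a hypothetical counterexample. Concretely I must (i) deploy the full cyclic-orbifold machinery for an arbitrary holomorphic $C_2$-cofinite $V$ rather than for $V^\natural$; (ii) carry out the laborious character calculation that pins down $\dim\tilde V_1=24$ from the non-Fricke modular behavior of $T_g$; and (iii) classify the involutions of $V_\Lambda$ concretely enough to match the unique character-$J$ orbifold with the explicit Frenkel--Lepowsky--Meurman construction of $V^\natural$. Steps (i)--(iii) carry essentially all of the vertex-operator-algebra input, drawing on \cite{DM04}, \cite{vEMS}, and orbifold duality; by contrast the Lie-theoretic framework of this paper supplies only (1) and (2) and, tellingly, is blind to this obstruction, which is precisely why the corresponding non-Fricke q-compatible symmetry nonetheless survives in $\fm$.
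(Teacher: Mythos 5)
Your proposal is correct and follows essentially the same route as the paper: parts (1) and (2) via the quantization/Borcherds picture and the Hauptmodul machinery, and part (3) by orbifolding the hypothetical non-Fricke involution to get a holomorphic VOA of character $J+24$, identifying it with $V_\Lambda$ via \cite{DM04}, and recognizing the dual involution as the Frenkel--Lepowsky--Meurman one, contradicting $V\not\cong V^\natural$. The obstacles you flag are exactly the points the paper discharges by citation: Ferenbaugh's classification \cite{F93} pins down $T_g = \Delta(\tau)/\Delta(2\tau)+24$ (your step (ii)), Theorem 5.15 of \cite{vEMS} supplies the order-two orbifold/duality structure (your step (i)), and the uniqueness of the order-2 automorphism of $V_\Lambda$ with the relevant fixed-point character plus the \cite{FLM88} construction handles your step (iii).
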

\begin{proof}
The first result follows from Borcherds's proof of the Monstrous Moonshine conjectures \cite{B92}.  The complete replicability also follows from Borcherds's proof, and the Hauptmodul property follows from \cite{CG97}.

Now, let $g$ be a non-Fricke involution on $V$.  Non-Fricke completely replicable functions of prime order $p$ are classified in \cite{F93}, and are given by the Hauptmoduln for $\Gamma_0(p)$ for $p=2,3,5,7,13$.  In particular, $\Tr(g q^{L_0-1}|V) = \frac{\Delta(\tau)}{\Delta(2\tau)} + 24 = q^{-1} +276q - 2048q^2 + \cdots$.  Theorem 5.15 in \cite{vEMS} then gives us a $\bZ/2\bZ \times \bZ/2\bZ$-graded abelian intertwining algebra structure on $V \oplus V(g)$, such that the part indexed $\bZ/2\bZ \times \{0\}$ is a holomorphic $C_2$-cofinite vertex operator algebra $W$ equipped with an involution $\sigma$ giving the grading.  Furthermore, the $\sigma$-orbifold of $W$ is isomorphic to $V$.  A short computation shows that $W$ has character $J+24$, and by Theorem 3 of \cite{DM04}, $W$ is isomorphic to the Leech lattice vertex operator algebra $V_\Lambda$.  The involution $\sigma$ is the unique order 2 automorphism of $V_\Lambda$ with fixed point character $q^{-1} + 98580q + 10745856q^2 + \cdots$.  However the construction of $V^\natural$ in \cite{FLM88} is precisely by the $\sigma$-orbifold of $V_\Lambda$, so $V \cong V^\natural$.
\end{proof}

More generally, we expect a similar argument to imply the absence of all non-Fricke automorphisms of $V$ (and would be very interested to see a proof worked out in detail).  Even without this strengthening, we see that $V$ and $V^\natural$ have quite different automorphism groups despite admitting the same Fricke Lie algebra with q-compatibility groupoid.   Of course, it is perhaps more reasonable to view this discrepancy as evidence that $V$ does not exist.

\subsubsection*{Acknowledgments}

The author would like to thank Jim Lepowsky for suggesting the application of Jurisich's methods to Monstrous Lie Algebras, and Greg Moore for asking about the automorphism group of the Monster Lie algebra, both during the ``Mock modular forms, Moonshine, and string theory'' program at the Simons Center for Geometry and Physics in 2013.  This research was supported in part by the Program to Disseminate Tenure Tracking System, MEXT, Japan.


\begin{thebibliography}{Jurisich-Lepowsky-Wilson 1995}

\bibitem[Borcherds 1992]{B92} 
R. Borcherds,
\textit{Monstrous Moonshine and monstrous Lie superalgebras}
Invent. Math. \textbf{109} (1992) 405--444.

\bibitem[Carnahan 2010]{GM1}
S. Carnahan,
\textit{Generalized Moonshine I: Genus zero functions}
Algebra and Number Theory \textbf{4} no. 6  (2010) 649--679.  Available as \url{http://arxiv.org/abs/0812.3440}

\bibitem[Carnahan 2012]{GM2}
S. Carnahan,
\textit{Generalized Moonshine II: Borcherds products}
Duke Math. J. \textbf{161} no. 5 (2012) 893--950.  \url{http://arxiv.org/abs/0908.4223}

\bibitem[Carnahan $\geq$2017]{GM4}
S. Carnahan,
\textit{Generalized Moonshine IV: Monstrous Lie algebras}
ArXiv preprint: \url{http://arxiv.org/abs/1208.6254}

\bibitem[Carnahan-Miyamoto $\geq$2017]{CM16}
S. Carnahan, M. Miyamoto,
\textit{Regularity of fixed-point vertex operator subalgebras}
ArXiv preprint \url{http://arxiv.org/abs/1603.05645}

\bibitem[Conway-McKay-Sebbar]{CMS04}
J. Conway, J. McKay, A. Sebbar,
\textit{On the discrete groups of Moonshine}
Proc. Amer. Math. Soc. \textbf{132} (2004) 2233--2240.

\bibitem[Conway-Norton 1979]{CN79} 
J. Conway, S. Norton,
\textit{Monstrous Moonshine}
Bull. Lond. Math. Soc. \textbf{11} (1979) 308--339.

\bibitem[Cummins-Gannon 1997]{CG97}
C. Cummins, T. Gannon,
\textit{Modular equations and the genus-zero property of modular functions}
Invent. Math. \textbf{129} (1997) 413--443.

\bibitem[Cummins-Norton 1995]{CN95}
C. Cummins, S. Norton,
\textit{Rational Hauptmoduls are replicable}
Can. J. Math \textbf{47} no. 6 (1995) 1201--1218.

\bibitem[Dixon-Ginsparg-Harvey 1988]{DGH88} 
L. Dixon, P. Ginsparg, J. Harvey,
\textit{Beauty and the Beast: Superconformal Symmetry in a Monster Module}
Commun. Math. Phys. \textbf{119} (1988) 221--241.

\bibitem[Dong-Lepowsky 1993]{DL93}
C. Dong, J. Lepowsky,
\textit{Generalized vertex algebras and relative vertex operators}
Progress in Mathematics \textbf{112} Birkh\"auser Boston, Inc., Boston, MA, (1993).

\bibitem[Dong-Li-Mason 2000]{DLM00} 
C. Dong, H. Li, G. Mason,
\textit{Modular invariance of trace functions in orbifold theory}
Comm. Math. Phys. \textbf{214} (2000) no. 1, 1--56.  Available as \url{http://arxiv.org/abs/q-alg/9703016}.

\bibitem[Dong-Mason 2004]{DM04}
C. Dong, G. Mason,
\textit{Holomorphic Vertex operator algebras of small central charge}
Pac. J. Math. \textbf{213} no.2 (2004) 253--266.

\bibitem[Duncan-Frenkel-2011]{DF11}
J. F. R. Duncan, I. B. Frenkel,
\textit{Rademacher sums, Moonshine and gravity}
Commun. Number Theory Phys. \textbf{5} (2011), no. 4, 1--128.  Available as \url{http://arxiv.org/abs/0907.4529}

\bibitem[van Ekeren, et al, 2015]{vEMS}
J. van Ekeren, S. M\"oller, N. Scheithauer,
\textit{Construction and classification of vertex operator algebras}
ArXiv preprint \url{http://arxiv.org/abs/1507.08142}

\bibitem[Ferenbaugh 1993]{F93}
C. Ferenbaugh,
\textit{The genus-zero problem for $n|h$-type groups}
Duke Math. J. {\em 72} (1993) no. 1, 31--63.

\bibitem[Ferenbaugh 1996]{F96}
C. Ferenbaugh,
\textit{Lattices and generalized Hecke operators}
Groups, difference sets and the Monster, Walter de Gruyter, (1996) 363--368.

\bibitem[Frenkel et al. 1988]{FLM88} 
I. Frenkel, J. Lepowsky, A. Meurman,
\textit{Vertex operator algebras and the Monster}
Pure and Applied Mathematics \textbf{134} Academic Press, Inc., Boston, MA (1988).

\bibitem[Ganter 2009]{G09}
\textit{Hecke operators in equivariant elliptic cohomology and generalized Moonshine}
Harnad, John (ed.) et al., Groups and symmetries. From Neolithic Scots to John McKay. AMS CRM Proceedings and Lecture Notes \textbf{47} (2009) 173--209.   Available at \url{http://arxiv.org/abs/0706.2898}

\bibitem[Garland-Lepowsky 1976]{GL76}
H. Garland, J. Lepowsky,
Lie algebra homology and the Macdonald-Kac formulas,
\textit{Invent. math.}  \textbf{34} (1976), 37--76.

\bibitem[Goddard-Thorn 1972]{GT72}
P. Goddard, C. Thorn,
\textit{Compatibility of the dual pomeron with unitarity and the absence of ghosts in the dual resonance model}
Physics Letters \textbf{40B} no. 2, (1972) 235--238.

\bibitem[H\"ohn 2003]{H03}
G. H\"ohn,
\textit{Generalized Moonshine for the Baby Monster}
preprint (2003).

\bibitem[Jurisich-Lepowsky-Wilson 1995]{JLW95}
E. Jurisich, J. Lepowsky, R. L. Wilson,
\textit{Realizations of the Monster Lie algebra}
Selecta Math. \textbf{1}:1 (1995) 129--161.

\bibitem[Jurisich 1998]{J98}
E. Jurisich,
\textit{Generalized Kac-Moody Lie algebras, free Lie algebras and the structure of the Monster Lie algebra}
J. Pure and Appl. Alg. \textbf{126} (1998) 233--266.
Available at \url{http://arxiv.org/abs/1311.3258}

\bibitem[Jurisich 2004]{J04}
E. Jurisich,
\textit{A resolution for standard modules of Borcherds Lie algebras}
J. Pure Appl. Alg. \textbf{192} (2004) 149--158.

\bibitem[Kozlov 1994]{K94}
D. N. Kozlov,
\textit{On completely replicable functions and extremal poset theory}
M.Sc. thesis (1994) Department of Math., University of Lund, Sweden.

\bibitem[Martin 1996]{M94}
Y. Martin,
\textit{On modular invariance of completely replicable functions}
Moonshine, the Monster, and related topics (South Hadley, MA, 1994), Contemp. Math. \textbf{193} (1996), 263--286.

\bibitem[Norton 1984]{N84}
S. Norton,
\textit{More on Moonshine}
Computational Group Theory, ed. M. D. Atkinson, (1984) 185--193 Academic Press.

\bibitem[Norton 1987]{N87}
S. Norton,
\textit{Generalized Moonshine}
Proc. Sympos. Pure Math. \textbf{47} Part 1,  The Arcata Conference on Representations of Finite Groups (Arcata, Calif., 1986),  209--210, Amer. Math. Soc., Providence, RI (1987).

\bibitem[Norton 2001]{N01} 
S. Norton,
\textit{From Moonshine to the Monster}
Proceedings on Moonshine and related topics (Montr\'eal, QC, 1999),  163--171,
CRM Proc. Lecture Notes \textbf{30}, Amer. Math. Soc., Providence, RI (2001).

\bibitem[Ogg 1974]{O74}
A. Ogg,
\textit{Automorphisms de courbes modulaires}
Sem. Delange-Pisot-Poitou, Th\'eorie des nombres \textbf{16} no. 1 (1974-1975), exp. no. 7, 1--8.

\bibitem[Paquette-Persson-Volpato 2016]{PPV16}
N. Paquette, D. Persson, R. Volpato,
\textit{Monstrous BPS-algebras and the superstring origin of Moonshine}
Comm. Num. Thy. Phys. \textbf{10} no. 3 (2016) 433--526.
Available as \url{http://arxiv.org/abs/1601.05412}

\bibitem[Paquette-Persson-Volpato 2017]{PPV17}
N. Paquette, D. Persson, R. Volpato,
\textit{BPS algebras, genus zero, and the heterotic Monster}
ArXiv preprint \url{http://arxiv.org/abs/1701.05169}

\bibitem[Queen 1981]{Q81}
L. Queen,
\textit{Modular Functions arising from some finite groups}
Mathematics of Computation \textbf{37} (1981) No. 156, 547--580.

\bibitem[Tuite 1995]{T95}
M. Tuite,
\textit{On the relationship between monstrous Moonshine and the uniqueness of the Moonshine module}
Comm. Math. Phys. \textbf{166} (1995) 495--532.

\bibitem[Tuite 2010]{T10}
M. Tuite,
\textit{Monstrous and generalized Moonshine and permutation orbifolds}
J. Lepowsky, J. McKay, and M. Tuite (eds.) Moonshine - The First Quarter Century and Beyond, CUP (2010) 378--392.

\end{thebibliography}
\end{document}